\author{Soichiro Fujii}
\date{\today}
\address{Research Institute for Mathematical Sciences, Kyoto University\\
 Kyoto 606-8502, Japan}
\title {Enriched categories and tropical mathematics}
\keywords{Enriched category, tropical mathematics, quantale}
\subjclass[2010]{18D20, 06F07, 54E35}
\email{s.fujii.math@gmail.com}
\tikzset{2cell/.style={-implies,double,double equal sign distance,shorten 
>=2pt, shorten <=3pt}}
\tikzset{2cellshort/.style={-implies,double,double equal sign distance,shorten 
>=4pt, shorten <=5pt}}
\tikzset{2cellr/.style={implies-,double,double equal sign distance,shorten 
>=3pt, shorten <=2pt}}
\tikzset{3cell/.style={-implies,double,double distance=2.5pt,shorten >=2pt, 
shorten <=3pt}}
\tikzset{labelsize/.style={font=\scriptsize}}
\tikzset{string/.style={very thick}}
\tikzset{
  pto/.style={->,postaction={decorate},
    decoration={
        markings,
        mark=at position 0.5 with {\arrow{|}}}
  },
}
\newcommand{\tzdarrow}[2]{
\draw[2cell] (#1,#2)++(0,0.3) to ++(0,-0.6);}
\declaretheoremstyle[
spaceabove=6pt, spacebelow=6pt,
headfont=\normalfont\itshape,
notefont=\mdseries, notebraces={(}{)},
bodyfont=\normalfont,
postheadspace=0.5em,
qed=$\square$
]{myproofstyle}
\declaretheorem[style=plain,numberwithin=section,name=Theorem]{theorem}
\declaretheorem[style=plain,sibling=theorem,name=Proposition]{proposition}
\declaretheorem[style=plain,sibling=theorem,name=Corollary]{corollary}
\declaretheorem[style=definition,qed=$\blacksquare$,sibling=theorem,name=Definition]{definition}
\declaretheorem[style=definition,qed=$\blacksquare$,sibling=theorem,name=Example]{example}
\newcommand{\dotminus}{\mathbin{\text{\@dotminus}}}
\newcommand{\@dotminus}{%
  \ooalign{\hidewidth\raise1ex\hbox{.}\hidewidth\cr$\m@th-$\cr}%
}
\mathchardef\mhyphen="2D
\newcommand{\pto}{}% just for safety
\newcommand{\pgets}{}% just for safety
\DeclareRobustCommand{\pto}{\mathrel{\mathpalette\p@to@gets\to}}
\DeclareRobustCommand{\pgets}{\mathrel{\mathpalette\p@to@gets\gets}}
\newcommand{\p@to@gets}[2]{%
  \ooalign{\hidewidth$\m@th#1\mapstochar\mkern5mu$\hidewidth\cr$\m@th#1\longrightarrow$\cr}%
}
\newcommand{\id}[1]{{\mathrm{id}_{#1}}}
\newcommand{\ob}[1]{{\mathrm{ob}(#1)}}
\newcommand{\op}{{\mathrm{op}}}
\newcommand{\rev}{{\mathrm{rev}}}
\newcommand{\revop}{{\mathrm{revop}}}
\newcommand{\cat}[1]{{\mathcal{#1}}}
\newcommand{\defemph}[1]{\textbf{#1}}
\newcommand{\zero}{\mathbf{0}}
\newcommand{\one}{I}
\newcommand{\tms}{\circ}
\newcommand{\pls}{\oplus}
\newcommand{\mto}{\mathbin{\searrow}}
\newcommand{\mot}{\mathbin{\swarrow}}
\newcommand{\Mat}[1]{{{#1}\mhyphen\mathbf{Mat}}}
\newcommand{\Rmax}{{\overline{\mathbb{R}}_{\mathrm{max},+}}}
\newcommand{\Rmin}{{\overline{\mathbb{R}}_{\mathrm{min},+}}}
\newcommand{\nonnegRmin}{{\overline{\mathbb{R}}^{\geq 0}_{\mathrm{min},+}}}
\newcommand{\nonnegRminmax}{{\overline{\mathbb{R}}^{\geq 0}_{\mathrm{min,max}}}}
\newcommand{\TV}{{\mathbf{2}}}
\newcommand{\End}{\mathrm{End}}
\newcommand{\Pow}{\mathcal{P}}
\newcommand{\Isb}{\mathrm{Isb}}
\newcommand{\Isbpos}{\mathcal{I}sb}
\newcommand{\Fix}{\mathrm{Fix}}
\newcommand{\Fixpos}{\mathcal{F}ix}
\newcommand{\tr}{\top}
\newcommand{\ind}[1]{\chi_{#1}}
\newcommand{\act}{\ast}
\newcommand{\Tr}{\mathrm{Tr}}
\newcommand{\pairing}[2]{{\langle #1,#2\rangle}}
\begin{document}
\begin{abstract}
This is a survey paper on the connection of enriched category theory over a quantale and tropical mathematics.
Quantales or complete idempotent semirings, as well as matrices with coefficients in them, are fundamental objects in both fields.
We first explain standard category-theoretic constructions on matrices, namely composition, right extension, right lifting and the Isbell hull.
Along the way, we review known reformulations (due to Elliott and Willerton) of tropical polytopes, directed tight spans and the Legendre--Fenchel transform by means of these constructions, illustrating their ubiquity in tropical mathematics and related fields.
We then consider complete semimodules over a quantale $\cat{Q}$, a tropical analogue of vector spaces over a field, and mention Stubbe's result identifying them with skeletal and complete $\cat{Q}$-categories.
With the aim to bridge a gap between enriched category theory and tropical mathematics, we assume no knowledge in either field.
\end{abstract}
\maketitle
\section{Introduction}
\emph{Tropical mathematics} (also called idempotent mathematics) is mathematics over an idempotent semiring, instead of over the fields of real or complex numbers as in many branches of ordinary mathematics.
A prototypical example of idempotent semirings is the so-called \emph{min-plus semiring} $([-\infty,\infty],\infty, 0, \min, +)$, defined as the set $[-\infty,\infty]$ of extended real numbers equipped with the minimum operation as (idempotent) addition and the ordinary addition $+$ as multiplication.
Such structures naturally arise in many branches of mathematical sciences, including operations research \cite{CuninghameGreen_minmax_appl}, formal language theory \cite{Simon_tropical,Pin_tropical}, mathematical physics \cite{LMS_idem_fa} and algebraic geometry \cite{Viro_dequantization}.

\emph{Enriched categories}, on the other hand, are a generalisation of categories \cite{MacLane_CWM} defined relative to a \emph{base}.
We can choose arbitrary \emph{monoidal categories} as the base, of which idempotent semirings are a special case. The theory works best when the base enjoys certain {completeness} properties, and in the case of idempotent semiring, imposing these additional completeness assumptions results in the notion of (unital) \emph{quantale} \cite{Mulvey_and}.
Preordered sets, metric spaces and ultrametric spaces are all instances of enriched categories over a quantale \cite{Lawvere_metric}.

In this paper we take a look at a branch of tropical mathematics, namely \emph{linear algebra over a quantale} (cf.~\cite{CGQ_duality}), from the enriched categorical perspective.
We believe that enriched category theory can provide an abstract framework to tropical mathematics, which would help us to recognise various easy facts that hold for purely formal reasons, and focus on genuine problems.
On the other hand, tropical mathematics can offer novel interpretation to category theoretic concepts, which might eventually lead to discovery of useful categorical notions of tropical origin, along the lines of Lawvere's notion of Cauchy completeness generalised from metric spaces to enriched categories \cite{Lawvere_metric}.
The purpose of this paper is to lay the foundations for a further fruitful interplay between enriched categories and tropical mathematics, by collecting basic notions and surveying known results which suggest various links between the two fields.
Accordingly, we have tried to make this paper mostly self-contained and accessible to both category theorists and tropical mathematicians.

\medskip

The outline of this paper is as follows.
After reviewing the notion of quantale (Section~\ref{sec:quantale}), we move on to study matrices over a quantale $\cat{Q}$ (called $\cat{Q}$-matrices) in Section~\ref{sec:matrices}.
Matrices over a quantale admit not only composition, but also the operations of \emph{right extension} and \emph{right lifting}, which may be computed by the formulas dual to the one for composition. Right extensions and right liftings of $\cat{Q}$-matrices enable us to define the \emph{Isbell hull} of a $\cat{Q}$-matrix (Section~\ref{sec:Isbell_hull}), which is known to unify such constructions as directed tight spans of metric spaces \cite{Willerton_tight_span}, lower semicontinuous convex functions on $\mathbb{R}^n$ \cite{Willerton_Legendre} and tropical polytopes \cite{Elliott_thesis}; we also provide brief expositions of these results.

In Section \ref{sec:semimodule} we study complete semimodules over a quantale (cf.~\cite{CGQ_duality,Belohlavek_Konecny}), a tropical analogue of vector spaces over a field. 
It turns out that the Isbell hull of a $\cat{Q}$-matrix naturally admits the structure of a complete semimodule over $\cat{Q}$, and conversely, any complete semimodule over $\cat{Q}$ can be realised as the Isbell hull of some $\cat{Q}$-matrix. 
In Section \ref{sec:Q-cat} we introduce categories enriched over a quantale $\cat{Q}$ (called $\cat{Q}$-categories). Any complete semimodule over $\cat{Q}$ naturally induces a $\cat{Q}$-category, and those $\cat{Q}$-categories arising from a complete semimodule over $\cat{Q}$ are characterised by the intrinsic properties of \emph{skeletality} and \emph{completeness} \cite{Stubbe_tensor_cotensor}. 

\subsection*{Acknowledgements}
We are grateful to Jonathan Elliott, Masahito Hasegawa and Simon Willerton 
for their encouragement and insightful discussion.

We would also like to thank anonymous reviewers who provided helpful comments on the earlier version of this paper.

\section{Quantales}\label{sec:quantale}
We first introduce \emph{quantales}, also known as \emph{complete 
idempotent semirings}.
They are the enriching (or base) categories
in the portion of enriched category theory we shall be concerned with;
in tropical mathematics, elements of a quantale are often considered as 
\emph{scalars}, and play the role similar to that of real or complex numbers in ordinary mathematics.

\begin{definition}[\cite{Mulvey_and}]\label{def:quantale}
A (unital) \defemph{quantale} $\cat{Q}$
is a complete lattice $(Q,\preceq_\cat{Q})$
equipped with a monoid structure $(Q,\one_\cat{Q},\tms_\cat{Q})$
such that the multiplication $\tms_\cat{Q}$ preserves arbitrary suprema each 
variable:
$(\bigvee_{i\in I} y_i)\tms_\cat{Q} x = \bigvee_{i\in I}(y_i\tms_\cat{Q} 
x)$ and 
$y\tms_\cat{Q} (\bigvee_{i\in I} x_i) = \bigvee_{i\in I}(y\tms_\cat{Q} x_i)$.
We often omit the subscript $\cat{Q}$ from the data of a quantale,
writing it simply as $\cat{Q}=(Q,\preceq,\one,\tms)$.
\end{definition}

Though we have introduced quantales in an order-theoretic manner, we could have 
introduced them in a more algebraic manner. 
In tropical mathematics, it is customary to start from the notion of \defemph{idempotent semiring}, namely a semiring $(Q,\zero,\one,\pls,\tms)$ with an idempotent addition:
$x\pls x = x$. 
%\begin{definition}\label{def:idem_semiring}
%An \defemph{idempotent semiring} is a semiring 
%$\cat{Q}=(Q,\zero,\one,\pls,\tms)$
%with idempotent addition: $x\pls x = x$.
%\end{definition}
Given such an idempotent semiring,
its additive part $(Q,\zero,\pls)$ is an idempotent commutative monoid, and thus 
induces a partial order $\preceq$ on $Q$ 
by $x\preceq y$ if and only if $x\pls y = y$.
The resulting poset $(Q,\preceq)$ is an 
(upper) semilattice (= poset with all finite suprema), with the least element $\zero$ and the binary supremum operation $\pls$.
Therefore quantales may also be construed as idempotent semirings with
an additional \emph{completeness} property.

Notice that in the definition of quantale,
we do {not} assume commutativity of the multiplication $\tms$ by default; 
quantales with commutative multiplication are said to be 
\defemph{commutative}.
Although many examples of quantales we shall consider below are
in fact commutative, we work within the more general noncommutative
setting in order to underline a certain parallelism between the structure of a 
quantale and that of \emph{matrices} (of arbitrary dimensions) with 
coefficients in a quantale (Section~\ref{sec:matrices}).
For matrices the multiplication (or composition) is not commutative; indeed, 
{commutativity of multiplication does not even make sense}. 

\medskip

The notion of \emph{adjunction} between posets is central to our approach (see also \cite{CGQ_duality,Belohlavek_Konecny}).
Recall that given posets $(L,\preceq)$ and $(L',\preceq')$,
two functions $f\colon L\longrightarrow L'$
and $u\colon L'\longrightarrow L$ are said to form an \defemph{adjunction}
(or \emph{Galois connection})
if for any $l\in L$ and $l'\in L'$,
\begin{equation}\label{eqn:adjointness_poset}
f(l)\preceq' l' \iff l\preceq u(l')
\end{equation}
holds. 
We call $f$ the \defemph{left adjoint} of $u$ and $u$ the \defemph{right 
adjoint} of $f$,
and write them as $f\dashv u$.
The adjointness relation (\ref{eqn:adjointness_poset}) is powerful enough to 
determine each of the functions $f$ and $u$ from the other, and force both of 
them to be \emph{monotone} functions (see \cite{Street_core}).

We record the following well-known fact, to which we shall resort frequently in 
the sequel.
\begin{proposition}[{See e.g., \cite[Proposition 7.34]{Davey_Priestley}}]\label{prop:adj_funct_thm}
Let $(L,\preceq_L)$ be a complete lattice and $(P,\preceq_P)$ be a poset.
A function $f\colon L\longrightarrow P$ preserves arbitrary suprema
if and only if there exists a function $u\colon P\longrightarrow L$
such that $f\dashv u$.
\end{proposition}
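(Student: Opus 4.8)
The plan is to prove the two directions separately, with the forward direction being the substantive one.

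For the ``if'' direction, suppose $f \dashv u$. I would show directly that $f$ preserves arbitrary suprema. Given a family $(l_i)_{i \in I}$ in $L$, set $s = \bigvee_{i \in I} l_i$. Since $f$ is monotone (a consequence of the adjointness relation, as noted just before the proposition), $f(l_i) \preceq_P f(s)$ for each $i$, so $f(s)$ is an upper bound of $\{f(l_i)\}_{i \in I}$; it remains to see it is the \emph{least} such. Suppose $p \in P$ satisfies $f(l_i) \preceq_P p$ for all $i$. By the adjointness relation~(\ref{eqn:adjointness_poset}), this is equivalent to $l_i \preceq_L u(p)$ for all $i$, hence $s \preceq_L u(p)$, and applying~(\ref{eqn:adjointness_poset}) once more gives $f(s) \preceq_P p$. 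Thus $f(s) = \bigvee_{i \in I} f(l_i)$.

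For the ``only if'' direction, suppose $f$ preserves arbitrary suprema. The idea is to define $u$ by the expected formula $u(p) = \bigvee \{ l \in L : f(l) \preceq_P p \}$, which makes sense because $L$ is a complete lattice. I would then verify the adjointness~(\ref{eqn:adjointness_poset}). For the implication $l \preceq_L u(p) \implies f(l) \preceq_P p$: applying $f$ (monotone, since it preserves suprema hence in particular binary ones) gives $f(l) \preceq_P f(u(p))$, and since $f$ preserves the supremum defining $u(p)$, we get $f(u(p)) = \bigvee \{ f(l') : f(l') \preceq_P p \} \preceq_P p$, so $f(l) \preceq_P p$. Conversely, if $f(l) \preceq_P p$ then $l$ itself belongs to the set $\{ l' : f(l') \preceq_P p \}$, so $l \preceq_L u(p)$ by definition of the supremum.

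The main subtlety, rather than an obstacle, is making sure the supremum-preservation is invoked at exactly the right point in the ``only if'' direction: one needs $f(u(p)) \preceq_P p$, and this is where completeness of $L$ together with preservation of \emph{arbitrary} (not merely finite) suprema is essential --- the set $\{ l : f(l) \preceq_P p \}$ can be infinite, and without preservation of its supremum we could not conclude $f(u(p)) \preceq_P p$. Monotonicity of $f$ is used freely and follows either from supremum-preservation directly or, in the ``if'' direction, from the remark preceding the proposition that an adjoint is automatically monotone; either way no extra work is needed.
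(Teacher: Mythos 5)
Your proof is correct, and it is the standard argument. Note that the paper itself does not prove this proposition --- it is stated as a well-known fact with a citation to Davey and Priestley --- so there is no in-paper proof to compare against; your argument is exactly the textbook one (define $u(p)$ as the supremum of the preimage of the down-set of $p$, then verify the two implications of the adjointness relation). One small point worth making explicit: in the ``only if'' direction you must check that $\bigvee \{ f(l') : f(l') \preceq_P p \}$ actually exists in $P$, since $P$ is only a poset; this follows precisely because $f$ preserves the supremum of $\{ l' : f(l') \preceq_P p \}$ (which exists in the complete lattice $L$), so the image supremum exists in $P$ and equals $f(u(p))$. You invoke this correctly, but it is the one place a careless reader might worry about existence of suprema in $P$, and you could flag it. Also, if one reads ``preserves arbitrary suprema'' as including the empty supremum (as is standard and as Davey--Priestley do), then $P$ automatically has a least element $f(\bot_L)$, so the set $\{ l : f(l) \preceq_P p \}$ always contains $\bot_L$ and is nonempty --- not that your argument needs this, since $L$ being complete handles the empty supremum as well.
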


As the first application of Proposition~\ref{prop:adj_funct_thm}, 
observe that in any quantale $\cat{Q}=(Q,\preceq,$ $\one,\tms)$
there are two \emph{residuation} operations:
for any $x\in Q$, the function $(-)\tms x\colon Q\longrightarrow Q$
preserves arbitrary suprema, and hence has a right adjoint 
$(-)\mot x\colon Q\longrightarrow Q$
called the \defemph{right extension along $x$};
similarly, for any $y\in Q$ the function 
 $y\tms (-)$ has a right adjoint $y\mto (-)$,
called the \defemph{right lifting along $y$}.
Of course, in a commutative quantale the right extensions and right liftings 
coincide.
The defining adjointness relations are:
\begin{equation}\label{eqn:residual_adjointness}
y\preceq z\mot x \iff y\tms x\preceq z \iff x\preceq y\mto z.
\end{equation}
Focusing on the leftmost and rightmost formulas of 
(\ref{eqn:residual_adjointness}), we obtain
\[
z\mot x\succeq y \iff x\preceq y\mto z.
\]
That is, $z\mot (-)\colon Q\longrightarrow Q$ (regarded as a 
function from the poset $\cat{Q}=(Q,\preceq)$ to its
dual $\cat{Q}^\op=(Q,\succeq)$) is the left 
adjoint of $(-)\mto z\colon 
Q\longrightarrow {Q}$ (from $\cat{Q}^\op$ to $\cat{Q}$)
for any $z\in Q$.
The three types of adjunctions
\begin{equation*}
\begin{tikzpicture}[baseline=-\the\dimexpr\fontdimen22\textfont2\relax ]
      \node (L) at (0,0) {$\cat{Q}$};
      \node (R) at (3,0) {$\cat{Q}$};
      
      \draw [->, bend left=20] (L) to node [auto,labelsize] {$(-)\tms x$} (R);
      \draw [<-, bend right=20] (L) to node [auto,labelsize,swap] {$(-)\mot x$} 
      (R);
      
      \node [rotate=-90]at (1.5,0) {$\dashv$};
\end{tikzpicture}
\qquad
\begin{tikzpicture}[baseline=-\the\dimexpr\fontdimen22\textfont2\relax ]
      \node (L) at (0,0) {$\cat{Q}$};
      \node (R) at (3,0) {$\cat{Q}$};
      
      \draw [->, bend left=20] (L) to node [auto,labelsize] {$y\tms (-)$} (R);
      \draw [<-, bend right=20] (L) to node [auto,labelsize,swap] {$y\mto (-)$} 
      (R);
      
      \node [rotate=-90]at (1.5,0) {$\dashv$};
\end{tikzpicture}
\qquad
\begin{tikzpicture}[baseline=-\the\dimexpr\fontdimen22\textfont2\relax ]
      \node (L) at (0,0) {$\cat{Q}$};
      \node (R) at (3,0) {$\cat{Q}^\op$};
      
      \draw [->, bend left=20] (L) to node [auto,labelsize] {$z\mot (-)$} (R);
      \draw [<-, bend right=20] (L) to node [auto,labelsize,swap] {$(-)\mto z$} 
      (R);
      
      \node [rotate=-90]at (1.5,0) {$\dashv$};
\end{tikzpicture}
\end{equation*}
are fundamental in theory of quantales. 
In Sections~\ref{sec:matrices} and \ref{sec:semimodule}, we shall see that similar structures arise from matrices with coefficients in a quantale, and complete semimodules over a quantale, respectively.

\medskip

We conclude this section with some examples of quantales.

\begin{example}
The \defemph{max-plus quantale} $\Rmax=([-\infty,\infty], \leq, 0, +)$.
Here, $([-\infty,\infty],\leq)$ is the poset $(\mathbb{R},\leq)$ of all real 
numbers (with the usual order)
extended with the least element $-\infty$ and the greatest element $\infty$.
The multiplication $+$ is the unique supremum-preserving extension to $[-\infty,\infty]$ of 
the ordinary addition of real numbers; the right extension/right lifting is then uniquely determined by the adjointness relation (\ref{eqn:residual_adjointness}), and is an extension of the ordinary subtraction of real numbers; see Table~\ref{table:max_plus}.
These extended addition and subtraction appear in \cite{Lawvere_state}.
The term ``max-plus'', common in tropical mathematics, derives from the fact that as an idempotent semiring, the addition and the multiplication in $\Rmax$ are the max operation and (an extension of) the ordinary addition respectively.

Sometimes in the literature the \emph{max-plus semifield} $([-\infty, \infty),-\infty , 0,\max, +)$, which is basically $\Rmax$ without the greatest element $\infty$, is used \cite{CuninghameGreen_minmax_appl,LMS_idem_fa}.
Part of the reasons to prefer it seems to lie in the fact that it is a \emph{semifield}, meaning that every non-zero (i.e., $\neq -\infty$; note that the zero in this semiring is $-\infty$) element has a multiplicative inverse (given by $x\mapsto -x$).
However, it is our impression that quantales are much more convenient mathematical objects to work with than idempotent semifields.
For instance, the underlying poset of an idempotent semifield cannot be a 
complete lattice except for trivial cases. 
\end{example}

\begin{table}[t]
\centering
\begin{tabular}{c c|c c c}
  \multicolumn{2}{c|}{\multirow{2}{*}{$y+x$}} &  & $x$ & \\
 \multicolumn{2}{c|}{} & $\infty$  & $s$       & $-\infty$ \\ 
 \hline
    & $\infty$     & $\infty$  & $\infty$  & $-\infty$ \\ 
 $y$&$t$           & $\infty$  &  $t+s$    & $-\infty$ \\
    &$-\infty$     & $-\infty$ & $-\infty$ & $-\infty$ 
\end{tabular}
\qquad
\begin{tabular}{c c|c c c} 
 \multicolumn{2}{c|}{\multirow{2}{*}{$z-y$}} &  & $y$ & \\
 \multicolumn{2}{c|}{} & $\infty$  & $t$       & $-\infty$ \\ 
 \hline
 &$\infty$      & $\infty$  & $\infty$  & $\infty$ \\ 
$z$ &$u$           & $-\infty$  &  $u-t$    & $\infty$ \\
 &$-\infty$     & $-\infty$ & $-\infty$ & $\infty$ 
\end{tabular}
\caption{The operation tables for $y+x\ (=y\tms x)$ and $z-y\ (= z\mot y = y\mto z)$ in $\Rmax$. $s,t$ and $r$ denote real numbers.}
\label{table:max_plus}
\end{table}

\begin{example}
The \defemph{min-plus quantale} $\Rmin=([-\infty,\infty], \geq, 0, +)$.
The underlying poset $([-\infty,\infty],\geq)$ is the poset $(\mathbb{R},\geq)$ of all real numbers ordered by the \emph{opposite} of the usual order, extended with the \emph{greatest} element $-\infty$ and the \emph{least} element $\infty$.
The extension of $+$ is again uniquely determined by the requirement that it should be part of a quantale structure.

In fact, the quantale $\Rmin$ is isomorphic to $\Rmax$ under the mapping $x\mapsto -x$. 
So they are just different ways of presenting the same structure.
However, note that this does not mean that the operation tables are the \emph{same} for $\Rmax$ and $\Rmin$; for instance, we have $-\infty + \infty = -\infty$
in $\Rmax$, but have $-\infty + \infty =\infty$ in $\Rmin$.
\end{example}

\begin{example}
The \defemph{nonnegative min-plus quantale} (also called the \emph{Lawvere quantale}) $\nonnegRmin=([0,\infty], \geq, 0,+)$.
This is $\Rmin$ restricted to the nonnegative (i.e., $\geq 0$) part.
The $+$ operation is the restriction of $+$ for $\Rmin$ to $[0,\infty]$, which is anyway the only sensible extension of the ordinary addition. 
The right extension/right lifting is given by an extension of the
\emph{truncated subtraction $\dotminus$}, defined by
$u \dotminus t = \max\{u-t, 0\}$ for nonnegative real numbers $t$ and $u$.
This quantale is introduced in \cite{Lawvere_metric} for a category-theoretic approach to the theory of metric spaces. 

Note that by restricting the isomorphism between $\Rmax$ and $\Rmin$, $\nonnegRmin$ is isomorphic to the \defemph{nonpositive max-plus quantale} (in the obvious sense), but it is not isomorphic to ``the nonnegative max-plus quantale''; indeed, there is no quantale that deserves this name.
It is impossible to endow a quantale structure on the poset $([0,\infty],\leq)$ in such a way that its multiplication is given by an extension of the ordinary addition $+$, because the quantale axioms would force $0+x=0$ for all $x\in [0,\infty]$.
\end{example}

\begin{example}
One can also consider the ``discrete'' variants of the quantales $\Rmax$ ($\cong \Rmin$) or $\nonnegRmin$, replacing the real numbers by the integers.
The discrete variant of $\Rmin$ is used in \cite{Fujii_BThesis} in connection to discrete convex analysis \cite{Murota_DCA}.
\end{example}

\begin{example}
	The \defemph{nonnegative min-max quantale} $\nonnegRminmax=([0, \infty],\geq, 0, \max)$.
	Its underlying poset $([0,\infty],\geq)$ is the same as that of $\nonnegRmin$. We take the binary max operation with respect to the usual ordering $\leq$, namely the binary infimum operation with respect to $\geq$, as the multiplication. The right extension/right lifting is given by 
	\[
	z\mot y=y\mto z=\begin{cases}
		0 \ \text{ if }y\geq z\\
		z \ \text{ otherwise.}
	\end{cases}
	\]
	This quantale is related to (a generalisation of) ultrametric spaces.
	
	We remark that more generally, any \defemph{locale}, i.e., a complete lattice in which the binary infimum operation $\wedge$ satisfies the infinitary distributive law $(\bigvee_{i\in I} y_i)\wedge x=\bigvee_{i\in I}(y_i\wedge x)$, acquires a quantale structure with $\wedge$ as the multiplication;
indeed quantales were first introduced as a quantum theoretic generalisation of locales \cite{Mulvey_and}.
The poset $([0,\infty],\geq)$, or more generally any totally ordered complete lattice, is a locale.
\end{example}

\begin{example}
The \defemph{truth value quantale} (or the \emph{two-element quantale}) $\TV=(\{\bot,\top\}, 
\vdash ,\top, \wedge)$.
The underlying poset of this quantale consists of $\top$ for ``truth'' and $\bot$ for ``falsity'', ordered by the entailment relation $\vdash$, so that $\bot \vdash \top$.
The monoid structure is given by conjunction $\wedge$.
This is also used in \cite{Lawvere_metric}.
\end{example}

\begin{example}
Let $\cat{M}=(M,e,\cdot)$ be a monoid. The
\defemph{free quantale generated by $\cat{M}$}
is $\Pow \cat{M}=(\Pow M,\subseteq,\{e\}, \cdot)$,
where $\Pow M$ is the power set of $M$ and the multiplication $\cdot$ on $\Pow 
M$ is the unique supremum-preserving extension of the original multiplication on $M$, which is given by
\[
A\cdot B = \{\,a\cdot b\mid a\in A, b\in B\,\}
\]
for all $A,B\in\Pow M$.
Unlike the previous examples, this quantale is not commutative unless $\cat{M}$ is.
It has been used in connection to formal language theory, both from the categorical side \cite{Rosenthal_quantaloid_automata} as well as from the tropical side \cite{Pin_tropical}.
\end{example}

\begin{example}\label{ex:quantale_binary_rel}
Let $A$ be a set. The poset $(\Pow (A\times A),\subseteq)$
of all binary relations on $A$ admits a quantale structure
$(\Pow(A\times A),\subseteq, I_{A},\circ)$, where $I_A$ denotes the diagonal relation on $A$ and $\circ$ denotes composition of relations.
This quantale is not commutative in general. 
In fact, this is obtained from $\TV$ via a general construction of \emph{matrix quantale}; see the next section.
\end{example}

\section{Matrices}\label{sec:matrices}
In this section we study matrices over a quantale.
\begin{definition}
Let $\cat{Q}$ be a quantale, and $A$ and $B$ be (possibly infinite) sets.
A \defemph{$\cat{Q}$-matrix} (or simply a \defemph{matrix}) \defemph{from $A$ to $B$} is a $(B\times A)$-indexed
family of elements of $Q$.
If  $X = (X_{b,a})_{b\in B, a\in A}$ is a $\cat{Q}$-matrix from $A$ to $B$,
we denote it by $X\colon A\pto B$.
\end{definition}

In particular, if $A=\{\,1,\dots,m\,\}$ and $B=\{\,1,\dots,n\,\}$
for some natural numbers $m$ and $n$, then a $\cat{Q}$-matrix
$X\colon A\pto B$ is nothing but an $(n\times m)$-matrix with coefficients in $\cat{Q}$, which we may write as
\begin{equation}\label{eqn:matrix_picture}
X=
\left[
\begin{tabular}{c c c c} 
 $X_{1,1}$ & $X_{1,2}$ & $\cdots$ & $X_{1,m}$ \\
 $X_{2,1}$ & $X_{2,2}$ & $\cdots$ & $X_{2,m}$ \\
 $\vdots$  & $\vdots$  & $\ddots$ & $\vdots$  \\
 $X_{n,1}$ & $X_{n,2}$ & $\cdots$ & $X_{n,m}$ \\
\end{tabular}
\right]
\end{equation}
as usual.

For any set $A$ we have the \defemph{identity matrix} $I_A$ 
\[
(I_A)_{a,a'} = \begin{cases}
\one_\cat{Q} & \text{ if } a= a',\\
\zero_\cat{Q} &\text{ otherwise,}
\end{cases}
\]
where $\zero_\cat{Q}$ denotes the least element in $(Q,\preceq_\cat{Q})$.
Given matrices $X\colon A\pto B$ and $Y\colon B\pto C$,
we may define their \defemph{composition} $Y\tms X\colon A\pto C$ as follows:
\begin{equation}\label{eqn:comp_matrix}
(Y\tms X)_{c,a} = \bigvee_{b\in B}(Y_{c,b}\tms_\cat{Q} X_{b,a}).
\end{equation}
Since the supremum $\bigvee$ corresponds to (possibly infinitary) addition in a quantale,
\eqref{eqn:comp_matrix} may be seen as the tropical analogue of the usual matrix multiplication formula.
It can be easily checked that the composition operation satisfies the familiar laws, namely
\begin{equation}\label{eqn:comp_matr_unit_assoc}
	X\tms I_A =X,\quad I_B\tms X =X\quad\text{and}\quad
	Z\tms(Y\tms X)=(Z\tms Y)\tms X
\end{equation}
for all $X\colon A\pto B$, $Y\colon B\pto C$ and $Z\colon C\pto D$.

We put a partial order $\preceq_{A,B}$ on the set $\Mat{\cat{Q}}(A,B)$
of all $\cat{Q}$-matrices from $A$ to $B$ in an entrywise manner, namely
$X\preceq_{A,B} X'$ if and only if for all $b\in B$ and $a\in A$, $X_{b,a}\preceq_\cat{Q}
X'_{b,a}$.
The poset $(\Mat{\cat{Q}}(A,B),\preceq_{A,B})$ is isomorphic 
to the $(B\times A)$-fold product of $(Q,\preceq_\cat{Q})$, and hence is again 
a complete lattice. 
We sometimes omit the subscripts $A,B$ in $\preceq_{A,B}$.
One easily verifies that the composition operation
preserves arbitrary suprema in each variable:
\begin{equation}\label{eqn:comp_matr_pres_sup}
(\bigvee_{i\in I} Y_i)\tms X = \bigvee_{i\in I}(Y_i\tms X)
\quad\text{and}\quad
Y\tms (\bigvee_{i\in I} X_i) = \bigvee_{i\in I}(Y\tms X_i)
\end{equation}
for all $X,X_i\colon A\pto B$ and $Y,Y_i\colon B\pto C$.

Notice the evident parallelism of the structure of a quantale
(Definition~\ref{def:quantale}) and that of matrices.
We see in particular that for any set $A$, the set of all matrices of type $A\pto A$ admits a natural structure of a quantale, forming a \defemph{matrix quantale} (cf.~Example~\ref{ex:quantale_binary_rel}).
Indeed, taking into account the non-square matrices as well, we may summarise equations \eqref{eqn:comp_matr_unit_assoc} and \eqref{eqn:comp_matr_pres_sup}
by saying that for any quantale $\cat{Q}$, $\cat{Q}$-matrices
(of arbitrary dimensions) form a \emph{quantaloid}
$\Mat{\cat{Q}}$.
We do not need a formal definition of quantaloid in this paper; informally, it is a ``many-object version'' of quantale, 
just as groupoids are groups with many objects. See e.g., \cite{Rosenthal_quantaloid_automata,Stubbe_quantaloid_dist,Shen_Zhang_Isbell_Kan}.
%In fact, after appropriate modifications, 
%most results in this paper would continue to hold even if our base quantale $\cat{Q}$
%were a quantaloid.
%One immediate benefit of such generalisation is that we can state and prove a universal characterisation of the $\Mat{(-)}$ construction, as well as its \emph{idempotency}
%$\Mat{(\Mat{\cat{Q}})}\simeq \Mat{\cat{Q}}$;
%see \cite{CJSV_modulated,Stubbe_quantaloid_dist}.

\medskip

The structural similarity of a quantale and matrices over it
suggests the possibility of extending
the constructions in the former, in particular that of residuation, to the latter.
This is indeed the case, and to do so we need no more than
Proposition~\ref{prop:adj_funct_thm}.
For any $\cat{Q}$-matrix $X\colon A\pto B$ and a set $C$, the first of the equations (\ref{eqn:comp_matr_pres_sup}) says that the function
\[
(-)\tms X\colon \Mat{\cat{Q}}(B,C)\longrightarrow \Mat{\cat{Q}}(A,C)
\]
preserves arbitrary suprema. Hence we obtain its right adjoint 
\[
(-)\mot X\colon \Mat{\cat{Q}}(A,C)\longrightarrow\Mat{\cat{Q}}(B,C),
\]
called the \defemph{right extension along $X$}.
Similarly, for any $\cat{Q}$-matrix $Y\colon B\pto C$ and a set $A$,
the function
\[
Y\tms (-)\colon \Mat{\cat{Q}}(A,B)\longrightarrow \Mat{\cat{Q}}(A,C)
\]
has a right adjoint 
\[
Y\mto (-)\colon \Mat{\cat{Q}}(A,C)\longrightarrow\Mat{\cat{Q}}(A,B),
\]
called the \defemph{right lifting along $Y$}.

Analogously to (\ref{eqn:residual_adjointness}),
the adjointness relation
\begin{equation}\label{eqn:matr_resid_adjointness}
Y\preceq_{B,C} Z\mot X\iff Y\tms X\preceq_{A,C} Z\iff
X\preceq_{A,B} Y\mto Z
\end{equation}
holds for any triple of matrices $X\colon A\pto B$, $Y\colon B\pto C$
and $Z\colon A\pto C$, implying the third type of adjunction
\begin{equation}\label{eqn:matr_resid_adjoint_diagram}
\begin{tikzpicture}[baseline=-\the\dimexpr\fontdimen22\textfont2\relax ]
      \node (L) at (0,0) {$\Mat{\cat{Q}}(A,B)$};
      \node (R) at (5.5,0) {$\Mat{\cat{Q}}(B,C)^\op$};
      
      \draw [->, transform canvas={yshift=6}] (L) to node [auto,labelsize] {$Z\mot (-)$} (R);
      \draw [<-, transform canvas={yshift=-6}] (L) to node [auto,labelsize,swap] {$(-)\mto Z$} 
      (R);
      
      \node [rotate=-90]at (2.6,0) {$\dashv$};
\end{tikzpicture}
\end{equation}
(sometimes called the \emph{Isbell adjunction} \cite{Shen_Zhang_Isbell_Kan}).
Note that, since elements of $Q$ can be identified with
$\cat{Q}$-matrices of type $1\pto 1$, where $1 = \{\ast\}$
is a singleton,
(\ref{eqn:residual_adjointness}) may be regarded as a special case of (\ref{eqn:matr_resid_adjointness})
where $A=B=C=1$. 

\medskip

Let us now take a closer look at the operations of 
right extension and right lifting of matrices.
Given matrices $X\colon A\pto B$ and $Z\colon A\pto C$,
the right extension of $Z$ along $X$ is a matrix $Z\mot X\colon B\pto C$.
First observe that we have 
\begin{equation}\label{eqn:counit_X_Z}
	(Z\mot X)\tms X\preceq_{A,C}Z,
\end{equation}
since by (\ref{eqn:matr_resid_adjointness}) this is equivalent to $Z\mot X\preceq_{B,C} Z\mot X$, which holds trivially.
\begin{comment}
Adopting notation in 2-category theory, we express the inequality (\ref{eqn:counit_X_Z}) by the diagram
\begin{equation}\label{eqn:counit_diag_X_Z}
\begin{tikzpicture}[baseline=-\the\dimexpr\fontdimen22\textfont2\relax ]
	  \node (L) at (0,-1) {$A$};
      \node (M) at (2,1) {$B$};
      \node (R) at (4,-1) {$C$,};
      
      \draw [pto] (L) to node [auto,labelsize] {$X$} (M);
      \draw [pto] (M) to node [auto,labelsize] {$Z\mot X$} (R);
      \draw [pto] (L) to node [auto,labelsize,swap] {$Z$} (R);
      
      \tzdarrow{2}{-0.2}
\end{tikzpicture}
\end{equation}
using the double arrow $\Rightarrow$ to denote $\preceq$.
\end{comment}
In more detail, the adjointness relation (\ref{eqn:matr_resid_adjointness}) says that 
$Z\mot X$ is the largest matrix of type $B\pto C$ with this property.
Namely, it is characterised by (\ref{eqn:counit_X_Z}) together with
\begin{equation*}
\text{for any }Y\colon B\pto C,\ Y\tms X\preceq_{A,C}Z
\begin{comment}
\begin{tikzpicture}[baseline=-\the\dimexpr\fontdimen22\textfont2\relax ]
	  \node (L) at (0,-1) {$A$};
      \node (M) at (2,1) {$B$};
      \node (R) at (4,-1) {$C$};
      
      \draw [pto] (L) to node [auto,labelsize] {$X$} (M);
      \draw [pto] (M) to node [auto,labelsize] {$Y$} (R);
      \draw [pto] (L) to node [auto,labelsize,swap] {$Z$} (R);
      
      \tzdarrow{2}{-0.2}
\end{tikzpicture}
\end{comment}
\text{ implies }
Y\preceq_{B,C}Z\mot X.
\begin{comment}
\begin{tikzpicture}[baseline=-\the\dimexpr\fontdimen22\textfont2\relax ]
      \node (L) at (0,0) {$B$};
      \node (R) at (3,0) {$C$.};
      
      \draw [pto, bend left=30] (L) to node [auto,labelsize] {$Y$} (R);
      \draw [pto, bend right=30] (L) to node [auto,labelsize,swap] {$Z\mot X$} 
      (R);
      
      \tzdarrow{1.5}{0}
\end{tikzpicture}
\end{comment}
\end{equation*}
Explicitly, the matrix $Z\mot X$ is given by
\begin{equation}\label{eqn:matr_ext_fml}
	(Z\mot X)_{c,b} = \bigwedge_{a\in A}(Z_{c,a}\mot X_{b,a}).
\end{equation}
The formula (\ref{eqn:matr_ext_fml}) may be verified by showing the adjointness relation (\ref{eqn:matr_resid_adjointness}) directly.
Similarly,
the right lifting $Y\mto Z$ is abstractly characterised by the properties
\begin{equation*}
Y\tms (Y\mto Z)\preceq_{A,C}Z
\begin{comment}
\begin{tikzpicture}[baseline=-\the\dimexpr\fontdimen22\textfont2\relax ]
	  \node (L) at (0,-1) {$A$};
      \node (M) at (2,1) {$B$};
      \node (R) at (4,-1) {$C$};
      
      \draw [pto] (L) to node [auto,labelsize] {$Y\mto Z$} (M);
      \draw [pto] (M) to node [auto,labelsize] {$Y$} (R);
      \draw [pto] (L) to node [auto,labelsize,swap] {$Z$} (R);
      
      \tzdarrow{2}{-0.2}
\end{tikzpicture}
\end{comment}
\end{equation*}
and
\begin{equation*}
\text{for any }X\colon A\pto B,\ 
Y\tms X\preceq_{A,C}Z
\begin{comment}
\begin{tikzpicture}[baseline=-\the\dimexpr\fontdimen22\textfont2\relax ]
	  \node (L) at (0,-1) {$A$};
      \node (M) at (2,1) {$B$};
      \node (R) at (4,-1) {$C$};
      
      \draw [pto] (L) to node [auto,labelsize] {$X$} (M);
      \draw [pto] (M) to node [auto,labelsize] {$Y$} (R);
      \draw [pto] (L) to node [auto,labelsize,swap] {$Z$} (R);
      
      \tzdarrow{2}{-0.2}
\end{tikzpicture}
\end{comment}
\text{ implies }
X\preceq_{A,B} Y\mto Z,
\begin{comment}
\begin{tikzpicture}[baseline=-\the\dimexpr\fontdimen22\textfont2\relax ]
      \node (L) at (0,0) {$A$};
      \node (R) at (3,0) {$B$,};
      
      \draw [pto, bend left=30] (L) to node [auto,labelsize] {$X$} (R);
      \draw [pto, bend right=30] (L) to node [auto,labelsize,swap] {$Y\mto Z$} 
      (R);
      
      \tzdarrow{1.5}{0}
\end{tikzpicture}
\end{comment}
\end{equation*}
whereas concretely it is given by the formula
\begin{equation}\label{eqn:right_lifting_formula}
	(Y\mto Z)_{b,a} = \bigwedge_{c\in C}(Y_{c,b}\mto Z_{c,a}).
\end{equation}

\medskip

We now introduce miscellaneous notions on matrices.
For any matrix $X\colon A\pto B$, its \defemph{transpose}
$X^\tr\colon B\pto A$ is defined by $X^\tr_{a,b}=X_{b,a}$.
Recall that $1=\{\ast\}$ is a singleton. 
As already mentioned, matrices of type $1\pto 1$ can be identified with elements of the base quantale $\cat{Q}$, and thus are also called \defemph{scalars}.
If $X\colon 1\pto 1$ is a scalar, then by abuse of notation we also denote by $X$ its unique entry $X_{\ast,\ast}\in Q$. For any set $A$, matrices of type $1\pto A$ are \defemph{column vectors of size $A$}, and dually 
those of type $A\pto 1$ are \defemph{row vectors of size $A$}; cf.~(\ref{eqn:matrix_picture}). 
Note that a column vector $X\colon 1\pto A$ and a row vector $Y\colon A\pto 1$ of the same size compose, 
and give rise to a scalar 
\begin{equation*}
	Y\tms X=\bigvee_{a\in A}(Y_{\ast,a}\tms X_{a,\ast}),
\end{equation*}
which may be thought of as the \emph{scalar product} of $Y$ and $X$.
For any matrix $X\colon A\pto B$ and $a\in A$, the 
\defemph{$a$-th column vector of $X$} is a column vector $X_{-,a}\colon 1\pto B$ of size $B$ defined by $(X_{-,a})_{b,\ast}=X_{b,a}$; dually, for any $b\in B$ we have the \defemph{$b$-th row vector of $X$} $X_{b,-}=((X^\tr)_{-,b})^\tr$.

\begin{example}\label{ex:Legendre_Fenchel}
Willerton~\cite{Willerton_Legendre} observes that the 
 Legendre--Fenchel transform (see e.g., \cite[Section~12]{Rockafellar_ca}) is an instance of right extension/right lifting of matrices.
Recall that the Legendre--Fenchel transform on a finite-dimensional real vector space $\mathbb{R}^n$ maps any function $f\colon\mathbb{R}^n\longrightarrow [-\infty,\infty]$ to its \defemph{conjugate}, which is a function $f^\ast$ of type $(\mathbb{R}^n)^\ast\longrightarrow[-\infty,\infty]$, 
where $(\mathbb{R}^n)^\ast =\mathbb{R}^n$ is the dual vector space of the original $\mathbb{R}^n$.
Using the \defemph{pairing function} $\pairing{-}{-}$ defined by $\pairing{p}{v}=\sum_{i=1}^n p_iv_i$ for each $p=(p_1,\dots,p_n)\in (\mathbb{R}^n)^\ast$ and 
$v=(v_1,\dots,v_n)\in \mathbb{R}^n$, the formula for the conjugate $f^\ast$ is given by 
\begin{equation}\label{eqn:Legendre_Fenchel}
f^\ast(p)=\sup_{v\in \mathbb{R}^n}\{\pairing{p}{v}-f(v)\}
\end{equation}
for each $p\in (\mathbb{R}^n)^\ast$.

To understand this construction via matrices, we first regard the pairing function $\pairing{-}{-}$ as an $\Rmin$-matrix $\pairing{-}{-}\colon \mathbb{R}^n\pto (\mathbb{R}^n)^\ast$. A function $f$ of type $\mathbb{R}^n\longrightarrow[-\infty,\infty]$ may be identified with an $\Rmin$-matrix $f\colon \mathbb{R}^n\pto 1$, and likewise a function $g$ of type $(\mathbb{R}^n)^\ast\longrightarrow[-\infty,\infty]$ with an $\Rmin$-matrix $g\colon 1\pto (\mathbb{R}^n)^\ast$. Then we have $f^\ast=\pairing{-}{-}\mot f$; see (\ref{eqn:matr_ext_fml}). Note that infima in $\Rmin$ (denoted by $\bigwedge$ in (\ref{eqn:matr_ext_fml})) amount to suprema with respect to the usual order (denoted by $\sup$ in (\ref{eqn:Legendre_Fenchel})).

The Legendre--Fenchel transform works in the converse direction as well, mapping a function $g\colon (\mathbb{R}^n)^\ast \longrightarrow[-\infty,\infty]$ to its conjugate $g^\ast\colon \mathbb{R}^n\longrightarrow[-\infty,\infty]$ given by essentially the same formula:
\[
g^\ast(v)=\sup_{p\in(\mathbb{R}^n)^\ast}\{\pairing{p}{v}-g(p)\}.
\]
Of course, this can be expressed as $g^\ast=g\mto \pairing{-}{-}$ using matrices.

The Legendre--Fenchel transform is an important tool in convex analysis. 
One reason for this importance is that for any function $f\colon \mathbb{R}^n\longrightarrow [-\infty,\infty]$, $(f^\ast)^\ast$ is its \defemph{convex envlope}, i.e., the least (with respect to the pointwise order $\leq$) lower-semicontinuous convex function above $f$. 
In particular, lower-semicontinuous convex functions can be characterised as the ``fixed points'' of the Legendre--Fenchel transform; we shall revisit this point in the next section.
\end{example}

\section{The Isbell hull of a matrix}\label{sec:Isbell_hull}
In this section we introduce the notion of Isbell hull of a matrix.
It incorporates many important constructions in tropical mathematics and related fields, such as the tropical polytope generated by a set of points \cite{Develin_Sturmfels_trop_conv}, the directed tight span of a (generalised) metric space \cite{Hirai_Koichi_tight_span,Kemajou_Kunzi_Otafudu_Isbell_hull}, and the set of lower-semicontinuous convex functions on $\mathbb{R}^n$.

\begin{definition}[{\cite[Section 4]{Shen_Zhang_Isbell_Kan}}]\label{def:Isbell_hull}
Let $\cat{Q}$ be a quantale and $Z\colon A\pto C$ be a $\cat{Q}$-matrix.
The \defemph{Isbell hull of $Z$}, denoted by $\Isb(Z)$,
is the set of all pairs $(X,Y)$ of $\cat{Q}$-matrices $X\colon A\pto 1$ and
$Y\colon 1\pto C$ such that both $Y=Z\mot X$ and $X=Y\mto Z$ hold.
%in other words, such that the following is {simultaneously} a right extension and a right lifting diagram:
%\begin{equation}\label{eqn:Isb_hull_X_Y}
%\begin{tikzpicture}[baseline=-\the\dimexpr\fontdimen22\textfont2\relax ]
%	  \node (L) at (0,-1) {$A$};
%      \node (M) at (2,1) {$1$};
%      \node (R) at (4,-1) {$C$.};
%      
%      \draw [pto] (L) to node [auto,labelsize] {$X$} (M);
%      \draw [pto] (M) to node [auto,labelsize] {$Y$} (R);
%      \draw [pto] (L) to node [auto,labelsize,swap] {$Z$} (R);
%      
%      \tzdarrow{2}{-0.2}
%\end{tikzpicture}
%\end{equation}
\end{definition}
In this section we treat the Isbell hull of a matrix only as 
a \emph{set} of certain pairs of matrices, but in fact it has rich structure, as we shall see in Section~\ref{sec:semimodule}.
The term ``Isbell hull'' is already used in \cite{Kemajou_Kunzi_Otafudu_Isbell_hull} to refer to a special case of Definition~\ref{def:Isbell_hull}.

The Isbell hull construction, like many important constructions, may be viewed from several different angles.
The first alternative view is provided by the following.

\begin{proposition}\label{prop:Isbell_hull_alternatively}
Let $Z\colon A\pto C$ be a $\cat{Q}$-matrix. A pair $(X,Y)\in\Mat{\cat{Q}}(A,1)\times \Mat{\cat{Q}}(1,C)$ belongs to $\Isb(Z)$
if and only if it is a maximal pair under-approximating $Z$,
in the sense that (i) $Y\tms X\preceq Z$, and (ii) if $(X',Y')\in \Mat{\cat{Q}}(A,1)\times \Mat{\cat{Q}}(1,C)$ satisfies $Y'\tms X'\preceq Z$, $X\preceq X'$
and $Y\preceq Y'$, then $X=X'$ and $Y=Y'$.
\end{proposition}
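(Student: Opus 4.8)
The plan is to prove both directions using only the adjointness relation (\ref{eqn:matr_resid_adjointness}) and basic monotonicity of composition, right extension and right lifting.

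First I would prove the forward direction. Suppose $(X,Y)\in\Isb(Z)$, so $Y=Z\mot X$ and $X=Y\mto Z$. Condition (i), namely $Y\tms X\preceq Z$, is immediate: since $Y=Z\mot X$, the inequality (\ref{eqn:counit_X_Z}) gives $(Z\mot X)\tms X\preceq Z$, i.e.\ $Y\tms X\preceq Z$. For condition (ii), suppose $(X',Y')$ satisfies $Y'\tms X'\preceq Z$, $X\preceq X'$ and $Y\preceq Y'$. From $Y'\tms X'\preceq Z$ and the adjointness (\ref{eqn:matr_resid_adjointness}) we get $Y'\preceq Z\mot X'$. Since $X\preceq X'$, monotonicity of $(-)\mot(-)$ in the denominator (which follows from the adjunction, as right adjoints are monotone) gives $Z\mot X'\preceq Z\mot X=Y$. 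Hence $Y'\preceq Y$, and combined with $Y\preceq Y'$ we obtain $Y=Y'$. Symmetrically, from $Y'\tms X'\preceq Z$ we get $X'\preceq Y'\mto Z$; since $Y\preceq Y'$ and $Y\mto(-)$ is monotone while $(-)\mto Z$ is antitone (again from the adjunction), $Y'\mto Z\preceq Y\mto Z=X$, so $X'\preceq X$, whence $X=X'$. This proves maximality.

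Next I would prove the converse. Suppose $(X,Y)$ is a maximal pair under-approximating $Z$. From (i), $Y\tms X\preceq Z$, so by (\ref{eqn:matr_resid_adjointness}) we have $Y\preceq Z\mot X$ and $X\preceq Y\mto Z$. The idea is to show that replacing $Y$ by $Z\mot X$, or $X$ by $Y\mto Z$, yields another pair under-approximating $Z$ dominating $(X,Y)$, so by maximality the replacements must be equalities. Concretely, consider the pair $(X, Z\mot X)$: we have $(Z\mot X)\tms X\preceq Z$ by (\ref{eqn:counit_X_Z}), and $X\preceq X$, $Y\preceq Z\mot X$, so by maximality $Y=Z\mot X$. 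Similarly consider $(Y\mto Z, Y)$: we have $Y\tms(Y\mto Z)\preceq Z$ (the dual of (\ref{eqn:counit_X_Z}), stated in the text as the characterising property of right lifting), and $X\preceq Y\mto Z$, $Y\preceq Y$, so by maximality $X=Y\mto Z$. Hence $(X,Y)\in\Isb(Z)$.

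I do not expect any step to be a genuine obstacle; the proof is a formal manipulation of the Isbell adjunction. The one point requiring slight care is the direction of monotonicity for $(-)\mot X$ in its subscript argument and for $(-)\mto Z$ in its argument — these are antitone in the relevant slot — but this is exactly the content of the third adjunction displayed after (\ref{eqn:residual_adjointness}) (in its matrix form (\ref{eqn:matr_resid_adjoint_diagram})), together with the fact that adjoints between posets are monotone, so it can be invoked directly. The only other thing to keep straight is which matrix plays which role when applying (\ref{eqn:counit_X_Z}) and its dual, but the types $X\colon A\pto 1$, $Y\colon 1\pto C$, $Z\colon A\pto C$ make the bookkeeping unambiguous.
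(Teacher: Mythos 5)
Your proof is correct and follows essentially the same route as the paper's: in the forward direction, adjointness plus the antitonicity of $Z\mot(-)$ and $(-)\mto Z$ give $Y'\preceq Y$ and $X'\preceq X$, and in the converse direction, testing maximality against the pairs $(X,Z\mot X)$ and $(Y\mto Z,Y)$ forces the two defining equalities. The only cosmetic difference is that you spell out the verification of condition (i) and the monotonicity bookkeeping, which the paper leaves implicit.
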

\begin{proof}
	Suppose $(X,Y)\in \Isb(Z)$.
	Given $(X',Y')$ with $Y'\tms X' \preceq Z$, 
	$X\preceq X'$ implies $Y'\preceq Z\mot X'\preceq Z\mot X=Y$;
	similarly, $Y\preceq Y'$ implies $X'\preceq X$.
	
	Conversely, suppose that $(X,Y)$ satisfies conditions (i) and (ii).
	Then the pair $(X,Z\mot X)$ satisfies 
	$(Z\mot X)\tms X\preceq Z$, $X\preceq X$ and, by (i), $Y\preceq Z\mot X$. So by (ii) we conclude $Y=Z\mot X$.
	Similarly, using the pair $(Y\mto Z,Y)$ we see that $X=Y\mto Z$.
\end{proof}

The Isbell hull of a matrix can be seen as an instance of the set of fixed points of an adjunction.
Suppose that 
\begin{equation*} 
\begin{tikzpicture}[baseline=-\the\dimexpr\fontdimen22\textfont2\relax ]
      \node (L) at (0,0) {$\cat{L}$};
      \node (R) at (3,0) {$\cat{L'}$};
      
      \draw [->, bend left=20] (L) to node [auto,labelsize] {$f$} (R);
      \draw [<-, bend right=20] (L) to node [auto,labelsize,swap] {$u$} 
      (R);
      
      \node [rotate=-90]at (1.5,0) {$\dashv$};
\end{tikzpicture}
\end{equation*}
is an adjunction between posets $\cat{L}=(L,\preceq)$ and $\cat{L'}=(L',\preceq')$.
By a \defemph{fixed point of the adjunction $f\dashv u$} we mean a pair $(l,l')\in L\times L'$ such that $l'=f(l)$ and $l=u(l')$.
The set of all fixed points of $f\dashv u$ is denoted by $\Fix(f\dashv u)$.
For any matrix $Z\colon A\pto C$, clearly $\Isb(Z)$ is $\Fix(Z\mot(-)\dashv 
(-)\mto Z)$ for the adjunction (\ref{eqn:matr_resid_adjoint_diagram}) with 
$B=1$.

We recall some well-known properties of $\Fix(f\dashv u)$.
By definition, given an element $(l,l')\in \Fix(f\dashv u)$, each of $l$ and $l'$ is determined from the other. 
This means that both projections $\pi_1\colon \Fix(f\dashv u)\longrightarrow L$ and $\pi_2\colon \Fix(f\dashv u)\longrightarrow L'$, mapping $(l,l')$ to $l$ and $l'$ respectively, are injective.
Given elements $(l_1,l'_1)$ and $(l_2,l'_2)$ of $\Fix(f\dashv u)$, we have $l_1\preceq l_2$ if and only if $l'_1\preceq' l'_2$, because the former is equivalent to $l_1\preceq u(l'_2)$ and the latter to $f(l_1)\preceq' l'_2$.
Hence we obtain a natural partial order on $\Fix(f\dashv u)$ induced by either of the projections, turning it into a poset $\Fixpos(f\dashv u)$.
The projections $\pi_1$ and $\pi_2$ are monotone by definition and moreover they have adjoints:
\begin{equation*}
\begin{tikzpicture}[baseline=-\the\dimexpr\fontdimen22\textfont2\relax ]
      \node (L) at (0,1) {$\cat{L}$};
      \node (R) at (4,1) {$\cat{L'}$};
      \node (B) at (2,-1){$\Fixpos(f\dashv u)$.};
      
      \draw [->, bend left=15] (L) to node [auto,labelsize] {$f$} (R);
      \draw [<-, bend right=15] (L) to node [auto,labelsize,swap] {$u$} (R);
       \draw [<-, bend left=18] (L) to node [midway,fill=white,labelsize] {$\pi_1$} (B);
       \draw [->, bend right=15] (L) to node [auto,labelsize,swap] {$g$} (B);
       \draw [->, bend left=15] (R) to node [auto,labelsize] {$v$} (B);
       \draw [<-, bend right=18] (R) to node [midway,fill=white,labelsize,swap] {$\pi_2$} (B);      
      
      \node [rotate=-90]at (2,1) {$\dashv$};
      \node [rotate=45]at (1,0) {$\dashv$};
      \node [rotate=-45]at (3,0) {$\dashv$};
\end{tikzpicture}
\end{equation*}
Here, the adjoints $g$ and $v$ are given by $g(l)=(uf(l), f(l))$ and $v(l')=(u(l'), fu(l'))$.
Observe in particular that the image of $\pi_1$ (which may be identified with $\Fix(f\dashv u)$ by injectivity of $\pi_1$) coincides with the image of $u$.
Infima and suprema in $\Fixpos(f\dashv u)$ are easy to describe. 
Given a family $((l_i,l'_i))_{i\in I}$ of elements of $\Fix(f\dashv u)$, $\bigwedge_{i\in I}(l_i,l'_i)\in\Fix(f\dashv u)$ exists if and only if $\bigwedge_{i\in I}l_i\in L$ exists, and in this case we have $\bigwedge_{i\in I} (l_i,l'_i) =(\bigwedge_{i\in I}l_i,f(\bigwedge_{i\in I} l_i))$; 
similarly, 
$\bigvee_{i\in I}(l_i,l'_i)=( u(\bigvee_{i\in I} l'_i), \bigvee_{i\in I} l'_i)$ whenever either side of the equality exists.
In particular, $\Fixpos(f\dashv u)$ is a complete lattice whenever either $\cat{L}$ or $\cat{L'}$ is. 

\begin{corollary}\label{cor:Isbell_hull_cor}
	Let $Z\colon A\pto C$ be a matrix.
	\begin{enumerate}
	\item The Isbell hull $\Isb(Z)$ of $Z$ admits a partial order $\preceq$ defined by $(X,Y)\preceq (X',Y')$ if and only if $X\preceq_{A,1} X'$, or equivalently if and only if $Y'\preceq_{1,C}Y$. 
	The poset $(\Isb(Z),\preceq)$ is a complete lattice.
	\item	A row vector $X\colon A\pto 1$ is in $\Isb(Z)$ (precisely: is in $\pi_1(\Isb(Z))$) if and only if there exists some column vector $Y\colon 1\pto C$ such that $X=Y\mto Z$.
	\item For any pair $(X',Y')\in \Mat{\cat{Q}}(A,1)\times \Mat{\cat{Q}}(1,C)$ under-approximating $Z$ (i.e., such that $Y'\tms X'\preceq Z$), there exists $(X,Y)\in\Isb(Z)$ with $X'\preceq X$ and $Y'\preceq Y$.
	\end{enumerate}
\end{corollary}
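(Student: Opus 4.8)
The plan is to derive all three parts of Corollary~\ref{cor:Isbell_hull_cor} as immediate consequences of the general facts about $\Fixpos(f\dashv u)$ developed in the preceding paragraphs, applied to the Isbell adjunction (\ref{eqn:matr_resid_adjoint_diagram}) with $B=1$, together with Proposition~\ref{prop:Isbell_hull_alternatively}. Recall that $\Isb(Z)=\Fix(Z\mot(-)\dashv(-)\mto Z)$, where the left adjoint $f=Z\mot(-)$ goes from $\Mat{\cat{Q}}(A,1)$ to $\Mat{\cat{Q}}(1,C)^\op$ and $u=(-)\mto Z$ goes back. The one subtlety to keep track of throughout is the order-reversal: the codomain poset is $\Mat{\cat{Q}}(1,C)^\op$, so ``$\preceq$'' in the abstract discussion corresponds to ``$\succeq_{1,C}$'' on column vectors, which is why the stated equivalence in part (1) reads $Y'\preceq_{1,C}Y$.

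For part (1), I would invoke the general construction of the poset $\Fixpos(f\dashv u)$: the partial order is induced by either projection, and $(X,Y)\preceq(X',Y')$ iff $X\preceq_{A,1}X'$ (via $\pi_1$) iff $f(X)\preceq f(X')$ in $\Mat{\cat{Q}}(1,C)^\op$, i.e.\ iff $Y\succeq_{1,C}Y'$ (via $\pi_2$). Completeness follows from the recorded fact that $\Fixpos(f\dashv u)$ is a complete lattice whenever either of $\cat{L}$ or $\cat{L'}$ is; here $\cat{L}=\Mat{\cat{Q}}(A,1)$ is a complete lattice, being (isomorphic to) an $A$-fold power of $(Q,\preceq_\cat{Q})$, as noted in Section~\ref{sec:matrices}. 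For part (2), I would cite the observation that the image of $\pi_1$ coincides with the image of $u=(-)\mto Z$: a row vector $X$ lies in $\pi_1(\Isb(Z))$ iff $X=Y\mto Z$ for some column vector $Y\colon 1\pto C$. For part (3), I would use the description of suprema in $\Fixpos(f\dashv u)$, or more directly Proposition~\ref{prop:Isbell_hull_alternatively}: given $(X',Y')$ under-approximating $Z$, set $X=Y'\mto Z$ and $Y=Z\mot X$; then $(X,Y)\in\Isb(Z)$ (one checks $X=Y\mto Z$ using the triangle inequalities), and $X'\preceq_{A,1}X$ follows from $Y'\tms X'\preceq Z$ via (\ref{eqn:matr_resid_adjointness}), while $Y'\preceq_{1,C}Y$ follows since $Y'\preceq Z\mot X'\preceq Z\mot X=Y$ (using $X'\preceq X$ and antitonicity of $Z\mot(-)$ in... wait, monotonicity after the reversal) — more carefully, $X'\preceq X$ gives $Z\mot X\preceq Z\mot X'$ is the wrong direction, so instead argue $Y'\tms X'\preceq Z$ gives $Y'\preceq Z\mot X'$, and separately one wants $Y'\preceq Y=Z\mot X$; since $X=Y'\mto Z$ we have $Y'\tms X=Y'\tms(Y'\mto Z)\preceq Z$, hence $Y'\preceq Z\mot X=Y$, as desired.

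I do not anticipate a genuine obstacle here: the corollary is essentially a dictionary-translation of the abstract lemmas on fixed points of adjunctions into the concrete language of $\cat{Q}$-matrices and the Isbell hull, and the statement of Proposition~\ref{prop:Isbell_hull_alternatively} has already done the bookkeeping that part (3) needs. The only point demanding a little care is the consistent handling of the opposite order on $\Mat{\cat{Q}}(1,C)$, so that the monotonicity/antitonicity of $Z\mot(-)$ and $(-)\mto Z$ is applied in the correct direction; I would state explicitly at the outset that $\Isb(Z)=\Fixpos(Z\mot(-)\dashv(-)\mto Z)$ with the second factor being $\Mat{\cat{Q}}(1,C)^\op$, and then let the general results do the rest, spelling out the order-reversal only where it affects the final inequalities.
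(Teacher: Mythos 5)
Your treatment of parts (1) and (2) matches the paper exactly: both are read off the preceding discussion of $\Fixpos(f\dashv u)$ (completeness of the fixed-point lattice, and the identification of $\pi_1(\Isb(Z))$ with the image of $u=(-)\mto Z$), and the paper accordingly writes ``We show only the third clause.'' For part (3) you and the paper run the same strategy --- apply the appropriate closure operator from the $\Fixpos$ discussion to land in $\Isb(Z)$, then use the adjointness relation \eqref{eqn:matr_resid_adjointness} to get the dominances --- but with dual choices of witness. The paper takes $(X,Y)=\bigl((Z\mot X')\mto Z,\ Z\mot X'\bigr)$, i.e.\ the image of $X'$ under $g(l)=(uf(l),f(l))$, so that $Y'\preceq Y$ is immediate from $Y'\tms X'\preceq Z$ and $X'\preceq X$ is the unit inequality. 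You instead take $(X,Y)=\bigl(Y'\mto Z,\ Z\mot(Y'\mto Z)\bigr)$, i.e.\ the image of $Y'$ under $v(l')=(u(l'),fu(l'))$, so that $X'\preceq X$ is immediate and $Y'\preceq Y$ follows from the counit inequality $Y'\tms(Y'\mto Z)\preceq Z$. Both are legitimate dominating elements of $\Isb(Z)$, though in general they need not coincide, so this is a genuine (if symmetric) variant. One remark on exposition: your first attempt at $Y'\preceq Y$ via ``$Y'\preceq Z\mot X'\preceq Z\mot X$'' was indeed wrong since $Z\mot(-)$ is order-reversing --- $X'\preceq X$ gives $Z\mot X\preceq Z\mot X'$, not the reverse --- but you caught this; when writing up, delete the false start and keep only the corrected argument.
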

\begin{proof}
	We show only the third clause. Given any $X'\in\Mat{\cat{Q}}(A,1)$, the pair $((Z\mot X')\mto Z, Z\mot X')$ is in $\Isb(Z)$ and we have $X'\preceq (Z\mot X')\mto Z$. If $ Y'\in \Mat{\cat{Q}}(1,C)$ satisfies $Y'\tms X'\preceq Z$, then we also have $Y'\preceq Z\mot X'$.
\end{proof}

Let us see some examples of the Isbell hull construction. The case of tropical polytope requires some preparation and will be treated in Section~\ref{sec:semimodule}.

\begin{example}[\cite{Willerton_Legendre}]
The set of all lower-semicontinuous convex functions of type $\mathbb{R}^n\longrightarrow [-\infty,\infty]$ may be understood as the Isbell hull of the $\Rmin$-matrix $\pairing{-}{-}\colon \mathbb{R}^n\pto(\mathbb{R}^n)^\ast$ in Example~\ref{ex:Legendre_Fenchel}. In light of that example, this is simply a restatement of the classical theorem that a function $f\colon \mathbb{R}^n\longrightarrow[-\infty,\infty]$ is lower-semicontinuous convex if and only if $f=g^\ast$ for some $g\colon (\mathbb{R}^n)^\ast\longrightarrow [-\infty,\infty]$ (if and only if $f=(f^\ast)^\ast$).
\end{example}

\begin{example}
The trivial-looking case of $\cat{Q}=\TV$ is in fact a rich source of instances of naturally occurring Isbell hulls. In this case, a matrix $Z\colon A\pto C$ may be identified with a relation $Z\subseteq C\times A$ consisting exactly of those pairs $(c,a)\in C\times A$ with $Z_{c,a}=\top$. Similarly, a row vector $X\colon A\pto 1$ and a column vector $Y\colon 1\pto C$ are identified with subsets $X\subseteq A$ and $Y\subseteq C$ respectively. Then we have an adjunction
\begin{equation}  \label{eqn:adjoint_powerset}
\begin{tikzpicture}[baseline=-\the\dimexpr\fontdimen22\textfont2\relax ]
      \node (L) at (0,0) {$\Pow A$};
      \node (R) at (3.5,0) {$(\Pow C)^\op,$};
      
      \draw [->, transform canvas={yshift=6}] (L) to node [auto,labelsize] {$Z\mot (-)$} (R);
      \draw [<-, transform canvas={yshift=-6}] (L) to node [auto,labelsize,swap] {$(-)\mto Z$} 
      (R);
      
      \node [rotate=-90]at (1.6,0) {$\dashv$};
\end{tikzpicture}
\end{equation}
given explicitly by
\[
Z\mot X=\{\, c\in C\mid\forall a\in X.\, (c,a)\in Z \,\}
\]
and 
\[
Y\mto Z=\{\, a\in A\mid\forall c\in Y.\, (c,a)\in Z \,\}.
\]
In fact, every adjunction between the powerset lattices $\Pow A$ and $(\Pow C)^\op$ is of the form (\ref{eqn:adjoint_powerset})
for some relation $Z\subseteq A\times C$. (One can reconstruct a relation from such an adjunction by considering the images of singletons.)

Being a general form of adjunctions (Galois connections) between powerset lattices, this includes many classical dualities; see e.g., \cite[Section~1.1.1]{Willerton_Legendre}. This is also the framework of \emph{formal concept analysis}, in which one is given the data consisting of a set $A$ of \emph{attributes}, a set $C$ of \emph{objects} and a \emph{satisfaction relation} $Z$ specifying which object satisfies which attribute, and aims to extract \emph{concepts}, which are nothing but elements of $\Isb(Z)$. Pavlovic \cite{Pavlovic_quantitative} proposes an enriched categorical generalisation of formal concept analysis via the \emph{nucleus of a profunctor}, which is the same as the Isbell hull of a $\cat{Q}$-matrix for a general quantale $\cat{Q}$.

Another classical instance of this construction is where $A=C$ and $Z$ is a partial order on $A$. In this case, the poset $\Isbpos(Z)$ (which is a complete lattice by Corollary~\ref{cor:Isbell_hull_cor}) is known as the \emph{MacNeille completion} of the poset $(A,Z)$.
\end{example}

\begin{example}\label{ex:d_tight_span}
Willerton~\cite{Willerton_tight_span} observes that the directed tight span of a generalised metric space \cite{Hirai_Koichi_tight_span,Kemajou_Kunzi_Otafudu_Isbell_hull} is an instance of the Isbell hull construction with $\cat{Q}=\nonnegRmin$.
A \defemph{generalised metric space} $(A,d)$ consists of a set $A$ together with a function $d\colon A\times A\longrightarrow[0,\infty]$ satisfying $d(a,a)=0$ and $d(a,a')+d(a'a'')\geq d(a,a'')$ (the triangle inequality) for each $a,a',a''\in A$.
Given a generalised metric space $(A,d)$, its \defemph{directed tight span} is given by the set of all pairs of functions $(X,Y)$ where $X,Y\colon A\longrightarrow [0,\infty]$, such that $X$ and $Y$ form a minimal pair satisfying $Y(a')+X(a)\geq d(a,a')$ for each $a,a'\in A$.
In light of Proposition~\ref{prop:Isbell_hull_alternatively}, this is clearly the Isbell hull of $d$ regarded as an $\nonnegRmin$-matrix $d\colon A\pto A$.

The directed tight span of a generalised metric space has a natural $l_\infty$-type metric on it, making it into a generalised metric space as well. 
(Indeed, the directed tight span of $(A,d)$ is characterised as the unique \emph{injective} and \emph{tight extension} of $(A,d)$.)
As we shall see in Example \ref{ex:nonnegRmin_cat_gms}, the notion of generalised metric space coincides with an enriched categorical notion of \emph{$\nonnegRmin$-categories}.
We can also recover the $l_\infty$ metric on the directed tight span from the general theory, since as we shall see in Sections \ref{sec:semimodule} and \ref{sec:Q-cat}, the Isbell hull of a $\cat{Q}$-matrix always acquires a natural structure of a $\cat{Q}$-category. 
\end{example}

\section{Complete semimodules}\label{sec:semimodule}
In this section we study the notion of complete semimodule over a quantale $\cat{Q}$. It will turn out that the Isbell hull  of any $\cat{Q}$-matrix acquires such structure and conversely, any complete semimodule is isomorphic to the Isbell hull of itself (regarded as a $\cat{Q}$-category; see Section \ref{sec:Q-cat}). 

\begin{definition}[{\cite[Section~2.2]{CGQ_duality}}]\label{def:comp_semimodule}
Let $\cat{Q}=(Q,\preceq_\cat{Q},\one_\cat{Q},\tms_{\cat{Q}})$ be a quantale.
A \defemph{(left) complete semimodule over $\cat{Q}$} is a 
complete lattice $\cat{M}=(M,\preceq_{\cat{M}})$ equipped with a left action $\act_{\cat{M}} \colon Q\times M\longrightarrow M$
of the monoid $(Q,\one_\cat{Q},\tms_\cat{Q})$ on the set $M$,
such that $\act_\cat{M}$ preserves arbitrary suprema in each argument:
$(\bigvee_{i\in I}x_i)\act_\cat{M} m = \bigvee_{i\in I}(x_i\act_\cat{M} m)$ and
$x\act_\cat{M}(\bigvee_{i\in I} m_i)=\bigvee_{i\in 
I}(x\act_\cat{M} m_i)$.

A \defemph{right complete semimodule over $\cat{Q}$} is a left complete semimodule over the quantale $\cat{Q}^\rev=(Q,\preceq_\cat{Q},I_\cat{Q},\tms^\rev_\cat{Q})$, where $x\tms^\rev_\cat{Q}y=y\tms_\cat{Q} x$. 
\end{definition}

Analogously to the case of quantale, there is an algebraic variant of the above order-theoretic definition.
In general, a \emph{semimodule over a semiring $\cat{Q}$} is a commutative monoid $\cat{M}$ equipped with an action of $\cat{Q}$, which may be succinctly expressed as a semiring homomorphism from $\cat{Q}$ to the semiring $\End(\cat{M})$ of all endomorphisms on $\cat{M}$. 
If $\cat{Q}$ is an idempotent semiring then the underlying commutative monoid $\cat{M}$ of any semimodule over it is necessarily idempotent, since the existence of a semiring homomorphism $\cat{Q}\longrightarrow\End(\cat{M})$ forces $\id{\cat{M}}\in\End(\cat{M})$ to be idempotent (with respect to addition).
So a semimodule over an idempotent semiring can be equivalently given as an (upper) semilattice with an appropriate action, and Definition~\ref{def:comp_semimodule} may be viewed as a \emph{complete} version of it.

Applying Proposition~\ref{prop:adj_funct_thm}, we obtain two residuation 
operations associated with a complete semimodule 
$\cat{M}=(M,\preceq_\cat{M},\act_\cat{M})$:
for each $m\in M$, the right adjoint $(-)\mot_\cat{M} m\colon M\longrightarrow Q$ of $(-)\act_\cat{M}m\colon Q\longrightarrow M$, and for each $x\in Q$,
the right adjoint $x\mto_\cat{M}(-)\colon M\longrightarrow M$ of 
$x\act_\cat{M}(-)\colon M\longrightarrow M$.
We again have the adjointness relation
\begin{equation}\label{eqn:residual_adj_semimod}
x\preceq_\cat{Q} n\mot_\cat{M} m \iff x\act_\cat{M} m\preceq_\cat{M} n \iff 
m\preceq_\cat{M} x\mto_\cat{M} n
\end{equation}
for each $x\in Q$ and $m,n\in M$.
Adopting terminology from enriched category theory, we call the function
\[
\act_\cat{M}\colon \cat{Q}\times \cat{M}\longrightarrow\cat{M}
\]
the \defemph{copower}, the function
\[
\mot_{\cat{M}}\colon \cat{M}\times \cat{M}^\op\longrightarrow\cat{Q}
\]
the \defemph{hom-functor}, and the function
\[
\mto_{\cat{M}}\colon \cat{Q}^\op\times \cat{M}\longrightarrow\cat{M}
\]
the \defemph{power}.

\begin{proposition}[{Cf.~\cite[Section~2.3]{CGQ_duality} and \cite[Proposition 5.10]{Stubbe_quantaloid_dist}}]\label{prop:comp_semimod_duality}
	Let $\cat{Q}=(Q,\preceq_\cat{Q},\one_\cat{Q},\tms_\cat{Q})$ be a quantale and $\cat{M}=(M,\preceq_\cat{M},\act_\cat{M})$ be a left complete semimodule over $\cat{Q}$.
	Then the triple $\cat{M}^\op=(M,\succeq_\cat{M},\mto_\cat{M})$ is a right complete semimodule over $\cat{Q}$ (called the \defemph{dual} of $\cat{M}$). 
	Moreover we have $(\cat{M}^\op)^\op=\cat{M}$.
\end{proposition}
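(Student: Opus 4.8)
The plan is to verify directly that the triple $\cat{M}^\op=(M,\succeq_\cat{M},\mto_\cat{M})$ satisfies Definition~\ref{def:comp_semimodule} for a left complete semimodule over $\cat{Q}^\rev$. First I would observe that $(M,\succeq_\cat{M})$ is a complete lattice, since the order-dual of any complete lattice is again a complete lattice (suprema and infima simply swap roles). Next I would check that the operation $\mto_\cat{M}\colon Q\times M\longrightarrow M$, $(x,m)\mapsto x\mto_\cat{M}m$, is a left action of the monoid $(Q,\one_\cat{Q},\tms^\rev_\cat{Q})$ on the set $M$; concretely this means $\one_\cat{Q}\mto_\cat{M}m=m$ and $(x\tms^\rev_\cat{Q}y)\mto_\cat{M}m=x\mto_\cat{M}(y\mto_\cat{M}m)$, i.e.\ $(y\tms_\cat{Q}x)\mto_\cat{M}m=x\mto_\cat{M}(y\mto_\cat{M}m)$. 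The cleanest route is to derive these from the adjointness relation \eqref{eqn:residual_adj_semimod} together with the corresponding identities $\one_\cat{Q}\act_\cat{M}m=m$ and $(y\tms_\cat{Q}x)\act_\cat{M}m=y\act_\cat{M}(x\act_\cat{M}m)$ for the original action: since left adjoints compose (and uniquely determine their right adjoints), the right adjoint of $(y\tms_\cat{Q}x)\act_\cat{M}(-)=y\act_\cat{M}(x\act_\cat{M}(-))$ is $x\mto_\cat{M}(y\mto_\cat{M}(-))$, giving the required equality by uniqueness of adjoints; similarly the right adjoint of $\mathrm{id}_M=\one_\cat{Q}\act_\cat{M}(-)$ is $\mathrm{id}_M$.

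Then I would check that $\mto_\cat{M}$ preserves arbitrary suprema of $(M,\succeq_\cat{M})$ in each argument — but suprema in $(M,\succeq_\cat{M})$ are infima in $(M,\preceq_\cat{M})$, so this amounts to showing that $x\mto_\cat{M}(-)$ and $(-)\mto_\cat{M}m$ (the latter viewed as a map into $(Q,\succeq_\cat{Q})$) preserve arbitrary infima with respect to $\preceq_\cat{M}$. For $x\mto_\cat{M}(-)$ this is immediate: it is a right adjoint (of $x\act_\cat{M}(-)$ by \eqref{eqn:residual_adj_semimod}), and right adjoints preserve all infima that exist. For the other variable, $x\mapsto x\mto_\cat{M}m$ is antitone in $x$ (directly from \eqref{eqn:residual_adj_semimod}); regarded as a function $(M,\preceq_\cat{M})\longrightarrow(Q,\succeq_\cat{Q})$ it is the right adjoint of $x\mapsto$ (something), or one can argue elementarily that $(\bigwedge_i m_i)\mto_\cat{M}m=\bigvee_i(m_i\mto_\cat{M}m)$ — here the left side is an infimum over $i$ of the $m_i$ in $\preceq_\cat{M}$ appearing in the first slot, and since $\mto_\cat{M}$ is antitone there, an infimum in the argument becomes a supremum in $\cat{Q}$, matching a supremum in $\cat{Q}^\rev$ which is what the axiom for the action $\mto_\cat{M}$ of $\cat{Q}^\rev$ on $\cat{M}^\op$ demands. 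One then packages these facts into the two supremum-preservation equations of Definition~\ref{def:comp_semimodule} read in $(M,\succeq_\cat{M})$ over the monoid $\cat{Q}^\rev$.

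Finally, for the involutivity claim $(\cat{M}^\op)^\op=\cat{M}$, I would note that the underlying set is unchanged, the order is dualized twice hence recovered, and the action of $(\cat{M}^\op)^\op$ is the power operation of the $\cat{Q}^\rev$-semimodule $\cat{M}^\op$, which by construction is the right adjoint of $x\mto_\cat{M}(-)$; but that right adjoint is $x\act_\cat{M}(-)$ by the left-to-right and right-to-left halves of \eqref{eqn:residual_adj_semimod}, so the action is recovered as well. I expect the main obstacle to be purely bookkeeping: keeping straight which order ($\preceq$ or $\succeq$) and which monoid ($\cat{Q}$ or $\cat{Q}^\rev$) each axiom is being checked against, and in particular being careful that "preserves suprema in the action" for $\cat{M}^\op$ translates to "preserves infima" for the original data — once that dictionary is fixed, every verification reduces to the adjointness relation \eqref{eqn:residual_adj_semimod} and the uniqueness of adjoints (Proposition~\ref{prop:adj_funct_thm}).
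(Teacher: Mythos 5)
Your approach is essentially the same as the paper's: reduce the claim to verifying the four semimodule axioms for $\cat{M}^\op$ over $\cat{Q}^\rev$ and observe that each follows from the adjointness relation \eqref{eqn:residual_adj_semimod} together with uniqueness of adjoints (Proposition~\ref{prop:adj_funct_thm}); the paper's proof is exactly this, stated very tersely, and you additionally spell out the involution $(\cat{M}^\op)^\op=\cat{M}$, which the paper leaves implicit. One local slip to repair: the displayed identity $(\bigwedge_i m_i)\mto_\cat{M}m=\bigvee_i(m_i\mto_\cat{M}m)$ is type-incorrect (the first slot of $\mto_\cat{M}$ takes elements of $Q$, not $M$) and has the lattice operations the wrong way around; the identity you actually need is $(\bigvee_{i\in I} x_i)\mto_\cat{M}m=\bigwedge_{i\in I}(x_i\mto_\cat{M}m)$ for $x_i\in Q$, which does follow from \eqref{eqn:residual_adj_semimod} by comparing downsets, exactly as your surrounding prose (antitone in the $Q$-slot, hence $Q$-suprema to $\cat{M}$-infima) already indicates.
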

\begin{proof}
	We need to show that $\one_\cat{Q}\mto_\cat{M}m=m$, $x\mto_\cat{M}(y\mto_\cat{M}m)=(y\tms_\cat{Q}x)\mto_\cat{M} m$,
	$(\bigvee_{i\in I}x_i)\act_{\cat{M}}m=\bigwedge_{i\in I}(x_i\act_\cat{M}m)$ and $x\act_\cat{M}(\bigwedge_{i\in I}m_i)=\bigwedge_{i\in I}(x\act_\cat{M}m_i)$, where $\bigwedge$
	denotes infima in the original complete lattice $(M,\preceq_{\cat{M}})$.
	They follow from the adjointness relation (\ref{eqn:residual_adj_semimod}).
\end{proof}

\begin{example}
	Here are some immediate examples of (left or right) complete semimodules over a quantale $\cat{Q}=(Q,\preceq_\cat{Q},\one_\cat{Q},\tms_\cat{Q})$; in the following, all complete semimodules are over $\cat{Q}$ unless otherwise stated.
	First, clearly the underlying complete lattice $(Q,\preceq_\cat{Q})$ of $\cat{Q}$ acquires a left complete semimodule structure via $\tms_\cat{Q}$; we denote this left complete semimodule also by $\cat{Q}$. 
	The dual right complete semimodule $\cat{Q}^\op$ corresponding to it (Proposition~\ref{prop:comp_semimod_duality}) uses the right lifting $\mto$ as action.
	
	On the other hand, the complete lattice $(Q,\preceq_{\cat{Q}})$ also underlies the quantale $\cat{Q}^\rev=(Q,\preceq_\cat{Q},\one_\cat{Q},\tms_\cat{Q}^\rev)$, hence via $\tms_\cat{Q}^\rev$ it becomes a left complete semimodule over $\cat{Q}^\rev$, i.e., a right complete semimodule (over $\cat{Q}$). This right complete semimodule is denoted by $\cat{Q}^\rev$.
	The dual left complete semimodule $\cat{Q}^\revop=(\cat{Q}^\rev)^\op$ uses the right extension $\mot$ as action. 
	
	Because left or right complete semimodules are closed under arbitrary products, for any pair of sets $A$ and $B$ we obtain a left complete semimodule $\cat{Q}^A\times (\cat{Q}^\revop)^B$ and a right complete semimodule $(\cat{Q}^\op)^A\times (\cat{Q}^\rev)^B$, which are easily seen to be the dual of each other.
\end{example}

The symmetry of the notions of left and right complete semimodule may leave one wonder whether it is possible to give the data of a complete semimodule in an unbiased manner, choosing neither the left action (copower) nor the right action (power) as a primitive. 
This is indeed possible, by means of the hom-functor.
The resulting notion is \emph{skeletal and complete $\cat{Q}$-category}; see Section~\ref{sec:Q-cat}. 

\begin{definition}
	Let $\cat{Q}$ be a quantale and $\cat{M}=(M,\preceq_{\cat{M}},\act_{\cat{M}})$ be a left complete semimodule over $\cat{Q}$. 
	A \defemph{left complete subsemimodule of $\cat{M}$} is a subset of $M$ closed under arbitrary suprema in $\cat{M}$ and the copower $x\act_\cat{M} (-)$ by an arbitrary element $x\in Q$.
	A \defemph{right complete subsemimodule of $\cat{M}$} is a subset of $M$ closed under arbitrary infima in $\cat{M}$ and the power $x\mto_\cat{M}(-)$ by an arbitrary element $x\in Q$.
	
	We regard a left (resp.~right) complete subsemimodule itself as a left (resp.~right) complete semimodule, by the induced order and operation.
\end{definition}
Of course, a right complete subsemimodule of $\cat{M}$ is nothing but a left complete subsemimodule of the left complete semimodule $\cat{M}^\op$ over $\cat{Q}^\rev$.

The set of all left complete subsemimodules of a fixed left complete semimodule $\cat{M}=(M,\preceq_\cat{M},\act_\cat{M})$ is closed under arbitrary intersections. Hence for any subset $S\subseteq M$ there exists the least left complete subsemimodule containing $S$, the \defemph{left complete subsemimodule generated by $S$}.
Similarly we have the \defemph{right complete subsemimodule generated by $S$}.

\begin{proposition}[{\cite[Theorem 3.1]{Belohlavek_Konecny}, cf.~\cite[Theorem 3.3.9]{Elliott_thesis} }]\label{prop:Isbell_hull_as_hull}
	Let $Z\colon A\pto C$ be a matrix. The Isbell hull $\Isb(Z)$,
	considered as a subset of $\Mat{\cat{Q}}(A,1)$, is the right complete subsemimodule generated by the set $\{\,Z_{c,-}\mid c\in C\,\}\subseteq \Mat{\cat{Q}}(A,1)$ of all row vectors of $Z$, where $\Mat{\cat{Q}}(A,1)$ is regarded as the left complete semimodule $\cat{Q}^A$. 
	Dually, $\Isb(Z)$ considered as a subset of $\Mat{\cat{Q}}(1,C)$ is the left complete subsemimodule generated by $\{\,Z_{-,a}\mid a\in A\,\}\subseteq\Mat{\cat{Q}}(1,C)$,
	where $\Mat{\cat{Q}}(1,C)$ is regarded as the left complete semimodule $(\cat{Q}^\revop)^C$.
\end{proposition}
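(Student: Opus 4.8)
The plan is to combine Corollary~\ref{cor:Isbell_hull_cor}(2) with two inclusions. By that corollary, $\Isb(Z)$ considered as a subset of $\Mat{\cat{Q}}(A,1)$ — that is, $\pi_1(\Isb(Z))$ — is exactly the set $P:=\{\,Y\mto Z\mid Y\in\Mat{\cat{Q}}(1,C)\,\}$ of row vectors of type $A\pto 1$; so, writing $R$ for the right complete subsemimodule of $\cat{Q}^A=\Mat{\cat{Q}}(A,1)$ generated by $\{\,Z_{c,-}\mid c\in C\,\}$, it suffices to prove $P=R$. Throughout, the main task is simply to translate the matrix-level operations (composition with scalars, right lifting, transpose) into the semimodule-level operations (power and infimum) on $\Mat{\cat{Q}}(A,1)$, keeping track of which order each side carries.

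First I would establish $R\subseteq P$ by checking that $P$ is itself a right complete subsemimodule of $\cat{Q}^A$ containing every generator $Z_{c,-}$; since $R$ is by definition the least such subset, this yields the inclusion. Closure under arbitrary infima is immediate: $(-)\mto Z\colon\Mat{\cat{Q}}(1,C)^\op\to\Mat{\cat{Q}}(A,1)$ is a right adjoint (this is \eqref{eqn:matr_resid_adjoint_diagram} with $B=1$), so $\bigwedge_{i\in I}(Y_i\mto Z)=(\bigvee_{i\in I}Y_i)\mto Z\in P$. For closure under the power $x\mto_\cat{M}(-)$ by a scalar $x\in Q$ one uses the composition law for right liftings in the quantaloid $\Mat{\cat{Q}}$, namely $x\mto(Y\mto Z)=(Y\tms x)\mto Z$ (which follows from the adjointness relation~\eqref{eqn:matr_resid_adjointness} applied three times), together with the observations that the power by $x$ in $\cat{Q}^A$ coincides with right lifting by the scalar $x\colon 1\pto 1$ and that $Y\tms x$ is again a column vector of type $1\pto C$. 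Finally, taking $Y$ to be the column vector with $\one$ in its $c$-th entry and $\zero$ elsewhere and using formula~\eqref{eqn:right_lifting_formula} together with $\one\mto q=q$ and $\zero\mto q=\top$, one computes $Y\mto Z=Z_{c,-}$, so each generator lies in $P$.

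Next I would prove $P\subseteq R$ by unwinding formula~\eqref{eqn:right_lifting_formula}: for $Y\colon 1\pto C$ and each $a\in A$ one has $(Y\mto Z)_{\ast,a}=\bigwedge_{c\in C}(Y_{c,\ast}\mto_\cat{Q}Z_{c,a})$, which says precisely that in $\cat{Q}^A$ the row vector $Y\mto Z$ equals $\bigwedge_{c\in C}\bigl(Y_{c,\ast}\mto Z_{c,-}\bigr)$, an infimum of powers (by the scalars $Y_{c,\ast}$) of the generators $Z_{c,-}$. Since $R$ is closed under powers and arbitrary infima, $Y\mto Z\in R$. Combined with the previous step this gives $\pi_1(\Isb(Z))=P=R$, the first assertion.

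The dual assertion follows by a symmetric argument, exchanging rows with columns, right liftings with right extensions, and left with right throughout; equivalently, one applies the first assertion to the transpose $Z^\tr\colon C\pto A$ regarded over the reversed quantale $\cat{Q}^\rev$, using the identity $(Y\mto_\cat{Q}Z)^\tr=Z^\tr\mot_{\cat{Q}^\rev}Y^\tr$ (and its companion for right extensions) to transport the Isbell hull, and noting that a right complete subsemimodule of $(\cat{Q}^\rev)^C$ is the same thing as a left complete subsemimodule of $(\cat{Q}^\revop)^C$. I expect the only genuine obstacle to be bookkeeping rather than anything conceptual — in particular, verifying carefully that the power operation on the semimodule $\Mat{\cat{Q}}(A,1)$ really is right lifting by a scalar, so that closure of $P$ under powers is an instance of the composition law, and that the generated right complete subsemimodule is correctly described as the closure under scalar powers and arbitrary infima.
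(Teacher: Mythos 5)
Your proposal is correct and uses exactly the ingredients of the paper's own proof: Corollary~\ref{cor:Isbell_hull_cor}(2) to identify $\pi_1(\Isb(Z))$ with the image of $(-)\mto Z$, the formula $Y\mto Z=\bigwedge_{c\in C}(Y_{c,\ast}\mto Z_{c,-})$ to express elements as tropical linear combinations of the generators, the identities $\bigwedge_i(Y_i\mto Z)=(\bigvee_i Y_i)\mto Z$ and $x\mto(Y\mto Z)=(Y\tms x)\mto Z$ for closure, and the indicator column vector to recover each $Z_{c,-}$. You merely organize it more explicitly as a pair of inclusions $R\subseteq P$ and $P\subseteq R$, which is the same argument.
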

\begin{proof}
	Let us prove that $\Isb(Z)\subseteq \cat{Q}^A$ is the right complete subsemimodule generated by $\{\,Z_{c,-}\mid c\in C\,\}$. 
	Recall the second clause of Corollary~\ref{cor:Isbell_hull_cor}, claiming that $X\colon A\pto 1$ is in $\Isb(Z)$ if and only if it is of the form $X=Y\mto Z$ for some $Y\colon 1\pto C$.
	The key observation is that the equation $X=Y\mto Z$ may be regarded as an expression of $X$ by a ``tropical linear combination (in $(\cat{Q}^\op)^A$)'' of the row vectors $Z_{c,-}$. 
	More precisely, we read from the formula (\ref{eqn:right_lifting_formula}) for right lifting that we have 
	   \begin{equation}\label{eqn:X_as_linear_combination}
	   X=Y\mto Z = \bigwedge_{c\in C} (Y_{c,\ast}\mto Z_{c,-}).
	   \end{equation}
	   Observe that the column vector $Y$ plays the role of ``coefficients'' of the above tropical linear combination.
	   Any row vector $X$ as in (\ref{eqn:X_as_linear_combination}), expressible as the infimum of powers of $Z_{c,-}$, must lie in any right complete subsemimodule containing the row vectors $Z_{c,-}$. 
	   Also we see that for any $c\in C$, the row vector $Z_{c,-}$
	   itself may be expressed as a trivial linear combination, upon taking $Y$ to be 
	   \[
	   Y_{c',\ast}=\begin{cases}
	   \one_Q &\text{if } c=c',\\
	   \zero_\cat{Q} &\text{otherwise.}
	   \end{cases}	
	   \]
	
	That the set $\Isb(Z)$ of all row vectors expressible as (\ref{eqn:X_as_linear_combination}) is closed under infima and the power $x\mto(-)$ by an element $x$ of $\cat{Q}$ may be checked easily. 
	Precisely, given a family $X_i=Y_i\mto Z$, we may take new coefficient $\bigvee_{i\in I}Y_i$ to express their infimum: $\bigwedge_{i\in I} X_i= (\bigvee_{i\in I}Y_i)\mto Z$.
	 Given $X=Y\mto Z$, we have $x\mto X= (Y\tms x)\mto Z$.
\end{proof} 

As a consequence of this proposition, the Isbell hull of any $\cat{Q}$-matrix acquires the structure of a left complete $\cat{Q}$-semimodule.

\begin{example}
	By using Proposition \ref{prop:Isbell_hull_as_hull}, we can see how tropical polytopes \cite{Develin_Sturmfels_trop_conv} are (modulo some points at infinity) instances of the Isbell hull construction.
	Let $m$ and $n$ be natural numbers. In \cite{Develin_Sturmfels_trop_conv}, the tropical polytope  generated by $m$ points $Z_1,\dots, Z_m\in\mathbb{R}^n$ is defined as the set of all points $X\in \mathbb{R}^n$ which can be expressed as a tropical linear combination 
	\begin{equation}\label{eqn:tropical_linear_conb}
		X= (Y_1\odot Z_1) \oplus \dots\oplus (Y_m \odot Z_m),
	\end{equation}
	where $Y_i\in \mathbb{R}$ and $\oplus$ and $\odot$ denote the coordinate-wise minimum and the addition ($c\odot (a_1,\dots,a_n) = (c+a_1,\dots,c+a_n)$) respectively.\footnote{Strictly speaking, Develin and Sturmfels consider an additional step of projectivisation in the definition, hence our tropical polytopes are the unprojectivised version of theirs.}
	As observed in \cite{Elliott_thesis}, this is the finite part of the Isbell hull of the $\Rmax$-matrix $Z\colon n\pto m$ (where $n$ (resp. $m$) denotes an $n$-element (resp. $m$-element) set) obtained from the coordinates of the points $Z_1,\dots,Z_m$. 
	Indeed, any expression of the form (\ref{eqn:tropical_linear_conb}) may be expressed as $X=Y\mto Z$ for some $\Rmax$-matrix $Y\colon 1\pto m$ with finite (i.e., $\neq \pm \infty$) entries and vice versa.
	
	In \cite{Develin_Sturmfels_trop_conv} the following interesting duality result has been established: there is a bijective correspondence between (the combinatorial types of) \emph{tropical polytopes in $\mathbb{R}^n$ generated by $m$ points} and \emph{tropical polytopes in $\mathbb{R}^m$ generated by $n$ points}. From our point of view, this result can be understood abstractly from the fact that for an $\Rmax$-matrix $Z$, $\Isb(Z)$ and $\Isb(Z^\tr)$ are canonically isomorphic; this follows from the commutativity of the quantale $\Rmax$.
\end{example}

%We have:
%\begin{itemize}
%\item $(p\mot_\cat{M}n)\tms_\cat{Q} (n\mot_\cat{M} m)\preceq_\cat{Q} 
%p\mot_\cat{M} m$
%(Because: $(p\mot_\cat{M}n)\tms_\cat{Q} (n\mot_\cat{M} m)\tms_\cat{Q} 
%m\preceq_\cat{Q} p$)
%\item $\one\preceq_\cat{Q} m\mot_\cat{M} m$
%\end{itemize}

\section{$\cat{Q}$-categories}\label{sec:Q-cat}
In this section, we introduce $\cat{Q}$-categories for a quantale $\cat{Q}$. They are instances of the well-studied notion of enriched category \cite{Kelly:enriched}.

\begin{definition}
Let $\cat{Q}=(Q,\preceq_\cat{Q},\one_\cat{Q},\tms_\cat{Q})$ be a quantale. 
A \defemph{$\cat{Q}$-category} $\cat{C}$ consists of:
\begin{description}
\item[(CD1)] a set $\ob{\cat{C}}$ of \defemph{objects};
\item[(CD2)] for each $c,c'\in\ob{\cat{C}}$, an element $\cat{C}(c,c')\in {Q}$
\end{description}
satisfying the following axioms:
\begin{description}
\item[(CA1)] for each $c\in\ob{\cat{C}}$, $\one_\cat{Q} \preceq_\cat{Q} \cat{C}(c,c)$;
\item[(CA2)] for each $c,c',c''\in\ob{\cat{C}}$, $\cat{C}(c',c'')\tms_\cat{Q} \cat{C}(c,c')\preceq_\cat{Q} 
\cat{C}(c,c'')$.
\end{description}
We also write $c\in\cat{C}$ for $c\in\ob{\cat{C}}$.
\end{definition}

Note that a $\cat{Q}$-category $\cat{C}$ can be identified with a $\cat{Q}$-matrix $\cat{C}\colon \ob{\cat{C}}\pto \ob{\cat{C}}$
with $\cat{C}_{c',c}=\cat{C}(c,c')$. 
In fact, a $\cat{Q}$-category may be defined more succinctly as a set $A$ (corresponding to (CD1)) together with a $\cat{Q}$-matrix $X\colon A\pto A$ (CD2) on it such that $I_A\preceq_{A,A} X$ (CA1) and $X\tms X\preceq_{A,A}X$ (CA2) hold \cite{Betti_et_al_var_enr}.

\begin{example}
	In the case $\cat{Q}=\TV$,  we may identify the data of a $\TV$-category $\cat{C}=(\ob{\cat{C}}, (\cat{C}(c,c'))_{c,c'\in\ob{\cat{C}}})$  with a set $\ob{\cat{C}}$ equipped with a binary relation $\preceq_\cat{C}$ on it (defined as the set of all pairs $(c,c')\in\ob{\cat{C}}\times\ob{\cat{C}}$ with $\cat{C}(c,c')=\top$).
	The axioms (CA1) and (CA2) for a $\TV$-category then translate to reflexivity and transitivity of $\preceq_\cat{C}$ respectively, hence a $\TV$-category is nothing but a \emph{preordered set}.
\end{example}

\begin{example}\label{ex:nonnegRmin_cat_gms}
	In the case $\cat{Q}=\nonnegRmin$, we may regard $\nonnegRmin$-categories as \emph{generalised metric spaces} \cite{Lawvere_metric} (cf.~Example \ref{ex:d_tight_span}); 
	objects of an $\nonnegRmin$-category $\cat{C}$ are thought of as \emph{points} and the element $\cat{C}(c,c')\in[0,\infty]$ as the \emph{distance from $c$ to $c'$}.
	Notice that the axioms for $\nonnegRmin$-category indeed translate to some of the axioms for metric spaces:
	\begin{description}
		\item[(CA1)] for each $c\in\ob{\cat{C}}$, $0\geq \cat{C}(c,c)$ (that is, $\cat{C}(c,c)=0$); and
		\item[(CA2)] for each $c,c',c''\in\ob{\cat{C}}$, $\cat{C}(c',c'')+ \cat{C}(c,c')\geq \cat{C}(c,c'')$ (the triangle inequality).
	\end{description}
Every metric space is an $\nonnegRmin$-category, but not conversely. $\nonnegRmin$-categories are more general than metric spaces in the following three aspects:
\begin{itemize}
	\item distance may attain $\infty$;
	\item distance is non-symmetric (or \emph{directed}), i.e., $\cat{C}(c,c')$ may not be equal to $\cat{C}(c',c)$; and
	\item $\cat{C}(c,c')=\cat{C}(c',c)=0$ does not necessarily imply $c=c'$.\qedhere
\end{itemize}
\end{example}

\begin{example}
	Similarly, $\nonnegRminmax$-categories may be regarded as \emph{generalised ultrametric spaces}; note that axiom (CA2) now reads:
	\begin{description}
		\item[(CA2)] for each $c,c',c''\in\ob{\cat{C}}$, $\max\{\,\cat{C} (c',c''), \cat{C}(c,c') \,\}\geq \cat{C} (c,c'')$. \qedhere
	\end{description}
\end{example}

\medskip

As we have already mentioned, every left complete semimodule over $\cat{Q}$ gives rise to a $\cat{Q}$-category. The precise construction is as follows.

\begin{definition}\label{def:comp_semimod_to_Qcat}
Let $\cat{M}=(M,\preceq_\cat{M},\act_{\cat{M}})$ be a left complete semimodule over $\cat{Q}$. The following data defines a $\cat{Q}$-category, again called $\cat{M}$:
	\begin{description}
		\item[(CD1)] the set of objects is $\ob{\cat{M}}=M$; and
		\item[(CD2)] for each $m,m'\in M$, the element $\cat{M}(m,m')\in Q$ is defined as $m'\mot_\cat{M} m$ (note the order of the arguments $m$ and $m'$).
	\end{description}
	The axioms can be checked easily by means of the adjointness relations (\ref{eqn:residual_adj_semimod}).
\end{definition}

The structure of a left complete semimodule $\cat{M}$, namely the order relation $\preceq_\cat{M}$ and the action $\act_\cat{M}$, can be recovered from the induced $\cat{Q}$-category $\cat{M}=(M,(\cat{M}(m,m'))_{m,m'\in M})$.
To see this, first note that any $\cat{Q}$-category $\cat{C}=(\ob{\cat{C}},(\cat{C}(c,c'))_{c,c'\in \ob{\cat{C}}})$ determines a preorder relation $\preceq_{\cat{C}}$ on $\ob{\cat{C}}$ by 
\begin{equation}\label{eqn:preorder_on_Qcat}
	c\preceq_{\cat{C}} c' \ \text{ if and only if }\ \one_\cat{Q}\preceq_\cat{Q} \cat{C}(c,c').
\end{equation}
Two objects $c,c'\in\cat{C}$ are said to be \defemph{isomorphic} if both $c\preceq_\cat{C} c'$ and $c'\preceq_\cat{C} c$ hold.
Isomorphic objects behave exactly in the same manner, namely if $c,c'\in\cat{C}$ are isomorphic then for every object $d\in\cat{C}$, we have $\cat{C}(c,d)=\cat{C}(c',d)$ and $\cat{C}(d,c)=\cat{C}(d,c')$.
We call a $\cat{Q}$-category $\cat{C}$ \defemph{skeletal} if isomorphic objects in $\cat{C}$ are equal, i.e., if the induced preorder relation $\preceq_\cat{C}$ on $\ob{\cat{C}}$ is antisymmetric and is a partial order relation.
Now, if a $\cat{Q}$-category $\cat{M}$ is derived from a left complete semimodule $\cat{M}$ by the construction of Definition~\ref{def:comp_semimod_to_Qcat}, then we actually recover the original order relation on the complete semimodule by (\ref{eqn:preorder_on_Qcat}) (see (\ref{eqn:residual_adj_semimod})).
The action $\ast_\cat{M}$ is then uniquely determined from the hom-functor $\cat{M}(-,-)$ via the adjunction $(-)\ast_\cat{M}m\dashv \cat{M}(m,-)$.

In fact, it is also possible to define the counterpart of $\ast_\cat{M}$ for general $\cat{Q}$-categories.
Given a $\cat{Q}$-category $\cat{C}$, an object $c\in\cat{C}$ and an element $x\in Q$, an object $c'\in\cat{C}$ is said to be a \defemph{copower of $c$ by $x$} if for any $d\in\cat{C}$, the equation
\[
\cat{C}(c',d)=\cat{C}(c,d)\mot x
\]
holds \cite[Section~3.7]{Kelly:enriched}.
Copowers of $c$ by $x$ may or may not exist in $\cat{C}$, but when they exist they are unique up to isomorphism: if $c'$ is a copower of $c$ by $x$, then an object $c''\in\cat{C}$ is also a copower of $x$ and $c$ if and only if $c'$ and $c''$ are isomorphic. 
In particular, in a sketetal $\cat{Q}$-category copowers are unique.

There is also a dual notion of \defemph{power of $c\in \cat{C}$ by $x\in Q$}, which is defined as an object $c'\in\cat{C}$ such that for any $d\in\cat{C}$, the equation
\[
\cat{C}(d,c')=x\mto \cat{C}(d,c)
\]
holds.

\begin{definition}[{\cite[Section 2]{Stubbe_tensor_cotensor}}]
	A $\cat{Q}$-category $\cat{C}$ is said to be:
	\begin{itemize}
		\item \defemph{copowered} if for any $x\in Q$ and $c\in\cat{C}$, a copower $c$ by $x$ exists in $\cat{C}$; 
		\item \defemph{powered} if for any $x\in Q$ and $c\in\cat{C}$, a power of $c$ by $x$  exists in $\cat{C}$; 
		\item \defemph{order-complete} if the preordered set $(\ob{\cat{C}},\preceq_\cat{C})$ is complete (i.e., if it admits suprema of arbitrary subsets);
		\item \defemph{complete} if it is powered, copowered and order-complete.\qedhere
	\end{itemize}
\end{definition}

In fact, the construction in Definition \ref{def:comp_semimod_to_Qcat} provides a one-to-one correspondence between \emph{left complete semimodules over $\cat{Q}$} on the one hand, and \emph{skeletal and complete $\cat{Q}$-categories} on the other \cite[Section 4]{Stubbe_tensor_cotensor}. See also \cite[Section~5]{Willerton_tight_span}.

\medskip

Recall that a $\cat{Q}$-category $\cat{C}$ may be identified with a $\cat{Q}$-matrix $\cat{C}\colon\ob{\cat{C}}\pto\ob{\cat{C}}$. We can then consider its Isbell hull $\Isb(\cat{C})$; by Proposition~\ref{prop:Isbell_hull_as_hull}, it is a complete semimodule over $\cat{Q}$, hence by and Definition~\ref{def:comp_semimod_to_Qcat} we may view $\Isb(\cat{C})$ as a (skeletal and complete) $\cat{Q}$-category.
A more explicit description of $\Isb(\cat{C})$ as a $\cat{Q}$-category is the following.

\begin{definition}
	Let $\cat{C}$ be a $\cat{Q}$-category. 
	The $\cat{Q}$-category $\Isb(\cat{C})$ is defined as follows:
	\begin{description}
		\item[(CA1)] its set of objects is $\Isb(\cat{C})$ as in Definition~\ref{def:Isbell_hull};
		\item[(CA2)] given $(X,Y)$ and $(X',Y')$ in $\Isb(\cat{C})$, the element $\Isb(\cat{C})((X,Y),(X',Y'))\in Q$ is defined as 
		\[
		\bigwedge_{c\in\cat{C}}X'_{\ast,c}\mto X_{\ast,c},
		\]
		or equivalently as 
		\[
		\bigwedge_{c\in\cat{C}} Y'_{c,\ast}\mot Y_{c,\ast}.
		\]
	\end{description}
	
	The $\cat{Q}$-category $\Isb(\cat{C})$ is called the \defemph{MacNeille completion of $\cat{C}$} (\cite[Definition 7.2]{Garner_topological}, \cite[Definition 5.5.2]{Shen_thesis}).
\end{definition}

In fact, it is known that for any skeletal and complete $\cat{C}$, $\Isb(\cat{C})$ is canonically isomorphic to $\cat{C}$ (see \cite[Proposition 5.5.5]{Shen_thesis} or \cite[Proposition 7.6]{Garner_topological}).
It follows that any skeletal and complete $\cat{Q}$-category, or equivalently any left complete semimodule over $\cat{Q}$, can be realised the Isbell hull $\Isb(Z)$ of a certain $\cat{Q}$-matrix $Z$.

%-------------------
\bibliographystyle{alpha} %
\bibliography{myref} %

\begin{thebibliography}{BCSW83}

\bibitem[BCSW83]{Betti_et_al_var_enr}
Renato Betti, Aurelio Carboni, Ross Street, and Robert Walters.
\newblock Variation through enrichment.
\newblock {\em Journal of Pure and Applied Algebra}, 29(2):109--127, 1983.

\bibitem[BK12]{Belohlavek_Konecny}
Radim Belohlavek and Jan Konecny.
\newblock Row and column spaces of matrices over residuated lattices.
\newblock {\em Fundamenta Informaticae}, 115(4):279--295, 2012.

\bibitem[CG91]{CuninghameGreen_minmax_appl}
R.~A. Cuninghame-Green.
\newblock Minimax algebra and applications.
\newblock {\em Fuzzy Sets and Systems}, 41(3):251--267, 1991.

\bibitem[CGQ04]{CGQ_duality}
Guy Cohen, St{\'e}phane Gaubert, and Jean-Pierre Quadrat.
\newblock Duality and separation theorems in idempotent semimodules.
\newblock {\em Linear Algebra and its Applications}, 379:395--422, 2004.

\bibitem[DP02]{Davey_Priestley}
Brian~A. Davey and Hilary~A. Priestley.
\newblock {\em Introduction to lattices and order}.
\newblock Cambridge university press, second edition, 2002.

\bibitem[DS04]{Develin_Sturmfels_trop_conv}
Mike Develin and Bernd Sturmfels.
\newblock Tropical convexity.
\newblock {\em Documenta Mathematica}, 9:1--27, 2004.

\bibitem[Ell17]{Elliott_thesis}
Jonathan Elliott.
\newblock {\em On the fuzzy concept complex}.
\newblock PhD thesis, University of Sheffield, 2017.

\bibitem[Fuj14]{Fujii_BThesis}
Soichiro Fujii.
\newblock A categorical approach to {L}-convexity, 2014.
\newblock Bachelor's thesis, The University of Tokyo, arXiv:1904.08413.

\bibitem[Gar14]{Garner_topological}
Richard Garner.
\newblock Topological functors as total categories.
\newblock {\em Theory and Applications of Categories}, 29(15):406--421, 2014.

\bibitem[HK12]{Hirai_Koichi_tight_span}
Hiroshi Hirai and Shungo Koichi.
\newblock On tight spans for directed distances.
\newblock {\em Annals of Combinatorics}, 16(3):543--569, 2012.

\bibitem[Kel82]{Kelly:enriched}
G.M. Kelly.
\newblock {\em Basic concepts of enriched category theory}, volume~64 of {\em
  London Mathematical Society Lecture Note Series}.
\newblock Cambridge University Press, 1982.
\newblock Also available online in \textit{Reprints in Theory and Applications
  of Categories}, No.~10 (2005) pp.~1--136.

\bibitem[KKO12]{Kemajou_Kunzi_Otafudu_Isbell_hull}
Elisabeth Kemajou, Hans-Peter~A. K{\"u}nzi, and Olivier~Olela Otafudu.
\newblock The {I}sbell-hull of a di-space.
\newblock {\em Topology and its Applications}, 159(9):2463--2475, 2012.

\bibitem[Law73]{Lawvere_metric}
F.~William Lawvere.
\newblock Metric spaces, generalized logic, and closed categories.
\newblock {\em Rendiconti del seminario mat{\'e}matico e fisico di Milano},
  XLIII:135--166, 1973.
\newblock Also available online in \textit{Reprints in Theory and Applications
  of Categories}, No.~1 (2001) pp.~1--37.

\bibitem[Law84]{Lawvere_state}
F.~William Lawvere.
\newblock State categories, closed categories, and the existence of
  semi-continuous entropy functions.
\newblock {\em IMA Preprint Series}, 86, 1984.

\bibitem[LMS01]{LMS_idem_fa}
Grigory~L. Litvinov, Victor~P. Maslov, and Grigory~B. Shpiz.
\newblock Idempotent functional analysis: an algebraic approach.
\newblock {\em Mathematical Notes}, 69(5-6):696--729, 2001.

\bibitem[ML98]{MacLane_CWM}
Saunders Mac~Lane.
\newblock {\em Categories for the Working Mathematician}, volume~5 of {\em
  Graduate Texts in Mathematics}.
\newblock Springer, second edition, 1998.

\bibitem[Mul86]{Mulvey_and}
Christopher~J. Mulvey.
\newblock \&.
\newblock {\em {Supplemento ai Rendiconti del Circolo Matem\`atico di Palermo.
  Serie II}}, 12:99--104, 1986.

\bibitem[Mur03]{Murota_DCA}
Kazuo Murota.
\newblock {\em Discrete convex analysis}.
\newblock SIAM, 2003.

\bibitem[Pav12]{Pavlovic_quantitative}
Dusko Pavlovic.
\newblock Quantitative concept analysis.
\newblock In {\em International Conference on Formal Concept Analysis}, pages
  260--277. Springer, 2012.

\bibitem[Pin98]{Pin_tropical}
Jean-Eric Pin.
\newblock Tropical semirings.
\newblock In {\em Idempotency}, pages 50--69, 1998.

\bibitem[Roc70]{Rockafellar_ca}
R.~Tyrrell Rockafellar.
\newblock {\em Convex Analysis}, volume~28 of {\em Princeton Landmarks in
  Mathematics and Physics}.
\newblock Princeton University Press, 1970.

\bibitem[Ros95]{Rosenthal_quantaloid_automata}
Kimmo~I. Rosenthal.
\newblock Quantaloids, enriched categories and automata theory.
\newblock {\em Applied Categorical Structures}, 3(3):279--301, 1995.

\bibitem[She14]{Shen_thesis}
Lili Shen.
\newblock {\em Adjunctions in Quantaloid-enriched Categories}.
\newblock PhD thesis, Sichuan University, 2014.

\bibitem[Sim94]{Simon_tropical}
Imre Simon.
\newblock On semigroups of matrices over the tropical semiring.
\newblock {\em Theoretical Informatics and Applications}, 28(3--4):277--294,
  1994.

\bibitem[Str12]{Street_core}
Ross Street.
\newblock The core of adjoint functors.
\newblock {\em Theory and Applications of Categories}, 27(4):47--64, 2012.

\bibitem[Stu05]{Stubbe_quantaloid_dist}
Isar Stubbe.
\newblock Categorical structures enriched in a quantaloid: categories,
  distributors and functors.
\newblock {\em Theory and Applications of Categories}, 14(1):1--45, 2005.

\bibitem[Stu06]{Stubbe_tensor_cotensor}
Isar Stubbe.
\newblock Categorical structures enriched in a quantaloid: tensored and
  cotensored categories.
\newblock {\em Theory and Applications of Categories}, 16(14):283--306, 2006.

\bibitem[SZ13]{Shen_Zhang_Isbell_Kan}
Lili Shen and Dexue Zhang.
\newblock Categories enriched over a quantaloid: {I}sbell adjunctions and {K}an
  adjunctions.
\newblock {\em Theory and Applications of Categories}, 28(20):577--615, 2013.

\bibitem[Vir01]{Viro_dequantization}
Oleg Viro.
\newblock Dequantization of real algebraic geometry on logarithmic paper.
\newblock In {\em European Congress of Mathematics}, pages 135--146. Springer,
  2001.

\bibitem[Wil13]{Willerton_tight_span}
Simon Willerton.
\newblock Tight spans, {I}sbell completions and semi-tropical modules.
\newblock {\em Theory and Applications of Categories}, 28(22):696--732, 2013.

\bibitem[Wil15]{Willerton_Legendre}
Simon Willerton.
\newblock The {L}egendre-{F}enchel transform from a category theoretic
  perspective, 2015.

\end{thebibliography}
\end{document}